\newtheorem{Cor}{Corollary}
 \newtheorem{Lemma}{Lemma}
 \newtheorem{ex}{Example}
 \newtheorem{Proposition}{Proposition}
 \theoremstyle{definition}
 \theoremstyle{remark}
 \newtheorem{Remark}[Lemma]{Remark}
 \numberwithin{equation}{subsection}
\begin{document}
\title[ON DYNAMICS GENERATED BY A UNIFORMLY CONVERGENT SEQUENCE OF MAPS]{ON DYNAMICS GENERATED BY A UNIFORMLY CONVERGENT SEQUENCE OF MAPS}%
\author{PUNEET SHARMA AND MANISH RAGHAV}
\address{Department of Mathematics, I.I.T. Jodhpur, Old Residency Road, Ratanada, Jodhpur-342011, INDIA}%
\email{puneet.iitd@yahoo.com, manishrghv@gmail.com }%

%\thanks{The first author thanks CSIR for financial support.}%

\subjclass[2010]{37B20, 37B55, 54H20}

\keywords{non-autonomous dynamical systems, equicontinuity, transitivity, weakly mixing, topological mixing, sensitivity, Li-Yorke sensitivity, proximal pairs}

\begin{abstract}
In this paper, we study the dynamics of a non-autonomous dynamical system $(X,\mathbb{F})$ generated by a sequence $(f_n)$ of continuous self maps converging uniformly to $f$. We relate the dynamics of the non-autonomous system $(X,\mathbb{F})$ with the dynamics of $(X,f)$. We prove that if the family $\mathbb{F}$ commutes with $f$ and $(f_n)$ converges to $f$ at a "sufficiently fast rate", many of the dynamical properties for the systems $(X,\mathbb{F})$ and $(X,f)$ coincide. In the procees we establish equivalence of properties like equicontinuity, minimality and denseness of proximal pairs (cells) for the two systems. In addition, if $\mathbb{F}$ is feeble open, we establish equivalence of properties like transitivity, weak mixing and various forms of sensitivities. We prove that feeble openness of $\mathbb{F}$ is sufficient to establish equivalence of topological mixing for the two systems. We prove that if $\mathbb{F}$ is feeble open, dynamics of the non-autonomous system on a compact interval exhibits any form of mixing if and only if $(X,f)$ exhibits identical form of mixing. We also investigate dense periodicity for the two systems. We give examples to investigate  sufficiency/necessity of the conditions imposed. In the process we derive weaker conditions under which the established dynamical relation (between the two systems $(X,\mathbb{F})$ and $(X,f)$) is preserved.
\end{abstract}
\maketitle

\section{INTRODUCTION}

The theory of dynamical systems has been long used to study various physical or natural processes occurring in nature. Many of these processes have been modelled using discrete or continuous systems and their long term behavior has been investigated. The theory has found applications in a variety of fields like complex systems, control theory, biomechanics and cognitive sciences. While \cite{beer} used dynamical systems theory to study the agent environment interaction in the cognitive setting, in \cite{Hamill} authors used dynamical systems approach to lower extremity running injuries. In \cite{kohmoto} authors used the theory of discrete systems to model the chemical turbulence in a system. Although, the theory has been used extensively in a variety of fields, in most cases the physical/natural system is approximated using an autonomous system. Consequently, the governing rule $f$ for a dynamical system is assumed to be constant with time and the dynamics of the system $(X,f)$ is used to approximate the dynamics of the underlying system. Although such studies have resulted in good approximations of the underlying systems, better approximations can be obtained by allowing the governing rule to be time variant. As any general model approximating any natural/physical process is non-autonomous in nature, such a modification provides greater insight to the problem and hence results in a better approximation of the original system. Thus there is a strong need to investigate natural or physical systems using non-autonomous dynamical systems. As a result, some investigations for such a setting in the discrete case have been made and interesting results have been obtained. While \cite{sk1} study the topological entropy when the family $\mathbb{F}$ is equicontinuous or uniformly convergent, \cite{sk2} discusses minimality conditions for a non-autonomous system on a compact Hausdorff space while focussing on the case when the non-autonomous system is defined on a compact interval of the real line. In \cite{pm} authors investigate a non-autonomous system generated by a finite family of maps. In the process, they study properties like transitivity, weak mixing, topological mixing, existence of periodic points, various forms of sensitivities and Li-Yorke chaos. In \cite{jd} authors prove that if $f_n\rightarrow f$, in general there is no relation between chaotic behavior of the non-autonomous system generated by $f_n$ and the chaotic behavior of $f$. In \cite{of} authors investigate properties like weakly mixing, topological mixing, topological entropy and Li-Yorke chaos for the non-autonomous system.\\

In this paper, we investigate the dynamical behavior of a non-autonomous system generated by a sequence $\mathbb{F}=(f_n)$ of maps converging uniformly to $f$. In the process we establish relation between the dynamical behavior of the systems $(X,\mathbb{F})$ and $(X,f)$.  We prove that if $f$ commutes with each $f_n$ and $\sum \limits_{n=1}^{\infty} d_H(f_n,f) < \infty$, many of the dynamical properties coincide for the two systems. In the process, we establish that "commutative condition" and "fast convergence" ensures equivalence of properties like equicontinuity, minimality and denseness of proximal cells (pairs) for the two systems. Further, we prove that while feeble openness of $\mathbb{F}$ ensures equivalence of topological mixing for the two systems, additional assumptions of "commutative condition" and convergence at a "sufficiently fast rate" are needed to establish equivalence of properties like transitivity, weak mixing and various forms of sensitivities. We prove that if the limit map is an isometry, "commutative condition" is redundant and hence the established results hold good if $f_n$ converge at a sufficiently fast rate. We prove that if $\mathbb{F}$ is feeble open, dynamics of the non-autonomous system on a compact interval exhibits any form of mixing if and only if $(X,f)$ exhibits identical form of mixing. Further, we prove that any point periodic for $(X,\mathbb{F})$ is periodic for $(X,f)$ and hence a dense set of periodic points for $(X,\mathbb{F})$ ensures dense set of periodic points for $(X,f)$. We give example to show that existence of periodic points is not equivalent for the two systems. We also give examples to investigate the suffiency/necessity of conditions imposed for the results to hold good. In the process, we derive weaker conditions under which the derived relation between the two systems $(X,\mathbb{F})$ and $(X,f)$ is preserved. Before we move further, we give some of the basic concepts and definitions required.\\

Let $(X,d)$ be a compact metric space and let $\mathbb{F}=\{f_n: n \in \mathbb{N}\}$ be a family of continuous self maps on $X$.  For any initial seed $x_0$, any such family generates a non-autonomous dynamical system via the relation $x_{n}= f_n(x_{n-1})$. Throughout this paper, such a dynamical system will be denoted by $(X,\mathbb{F})$. For any $x\in X$, $\{ f_n \circ f_{n-1} \circ \ldots \circ f_1(x) : n\in\mathbb{N}\}$ defines the orbit of $x$. The objective of study of a non-autonomous dynamical system is to investigate the orbit of an arbitrary point $x$ in $X$. For notational convenience, let $\omega^n_{n+k}=f_{n+k}\circ f_{n+k-1}\circ\ldots\circ f_{n+1}$ and $\omega_n(x) = f_n\circ f_{n-1}\circ \ldots \circ f_1(x)$ (the state of the system after $n$ iterations). \\

A point $x$ is called \textit{periodic} for  $(X,\mathbb{F})$ if there exists $n\in\mathbb{N}$ such that $\omega_{nk}(x)=x$ for all $k\in \mathbb{N}$. The least such $n$ is known as the period of the point $x$. A system $(X,\mathbb{F})$ is called feeble open if for any non-empty open set $U$ in $X$, $int(f(U))\neq \phi$ for all $f\in \mathbb{F}$. The system $(X,\mathbb{F})$ is equicontinuous if for each $\epsilon>0$, there exists $\delta>0$ such that $d(x,y)<\delta$ implies $d(\omega_n(x),\omega_n(y))<\epsilon$ for all $n\in\mathbb{N},~~x,y\in X$. The system $(X,\mathbb{F})$ is \textit{transitive} (or $\mathbb{F}$ is transitive) if for each pair of non-empty open sets $U,V$ in $X$, there exists $n \in \mathbb{N}$ such that $\omega_n(U)\bigcap V\neq \phi$. The system $(X,\mathbb{F})$ is said to be \textit{minimal} if it does not contain any proper non-trivial subsystems. The system $(X,\mathbb{F})$ is said to be \textit{weakly mixing} if for any collection of non-empty open sets $U_1, U_2, V_1,V_2$ in $X$ there exists a natural number $n$ such that $\omega_n(U_i) \bigcap V_i \neq \phi$, $i=1,2$. Equivalently, we say that the system is weakly mixing if $\mathbb{F}\times\mathbb{F}$ is transitive. The system is said to be \textit{topologically mixing} if for every pair of non-empty open sets $U, V$ there exists a natural number $K$ such that $\omega_n(U) \bigcap V \neq \phi$ for all $n \geq K$. The system is said to be \textit{sensitive} if there exists a $\delta>0$ such that for each $x\in X$ and each neighborhood $U$ of $x$, there exists $n\in \mathbb{N}$ such that $diam(\omega_n(U))>\delta$. If there exists $K>0$ such that $diam(\omega_n(U))>\delta$ $~~\forall n\geq K$, then the system is \textit{cofinitely sensitive}. A pair $(x,y)$ is proximal for $(X,\mathbb{F})$ if $\liminf\limits_{n\rightarrow \infty} d(\omega_n(x),\omega_n(y))=0$. For any $x\in X$, the set $Prox_{\mathbb{F}}(x)=\{y: (x,y) \text{~~is proximal for~} (X,\mathbb{F})\}$ is called the proximal cell of $x$ in $(X,\mathbb{F})$. A system $(X,\mathbb{F})$ is said to exhibit dense set of proximal pairs if the set of pairs proximal for $(X,\mathbb{F})$ is dense in $X\times X$. A set $S$ is said to be $\delta$-\textit{scrambled} in $(X,\mathbb{F})$ if for any distinct $x,y\in S$, $\limsup\limits_{n\rightarrow \infty} d(\omega_n(x),\omega_n(y))>\delta$ but $\liminf\limits_{n\rightarrow \infty} d(\omega_n(x),\omega_n(y))=0$. A system $(X,\mathbb{F})$ is Li-Yorke sensitive if there exists $\delta>0$ such that for each $x\in X$ and each neighborhood $U$ of $x$ there exists $y\in U$ such that $(x,y)$ is a $\delta$-scrambled set. For any $x\in X$, let $LY_{\mathbb{F}}(x)=\{y\in X : (x,y) \text{~~is a Li-Yorke pair for~} (X,\mathbb{F})\}$ is called the Li-Yorke cell of $x$. It may be noted that in case the $f_n$'s coincide, the above definitions coincide with the known notions of an autonomous dynamical system. See \cite{bc,bs,de} for details.\\

Let $X$ be a compact space and let $\mathcal{K}(X)$ denote the collection of all non-empty compact subsets of $X$. For any $A,B\in\mathcal{K}(X)$ define $D_H(A,B)=\inf\{\epsilon>0:A\subset S(B,\epsilon) \text{~~and~~} B\subset S(A,\epsilon)\}$ where $S(A,\epsilon)=\bigcup\limits_{x\in A} S(x,\epsilon)$ is the $\epsilon$-ball around $A$. Then $D_H$ defines a metric on $\mathcal{K}(X)$ and is known as the Hausdorff metric. It is known that a system $(X,f)$ is a weakly mixing (topological mixing) if and only if for any compact set $K$ with non-empty interior $\limsup\limits_{n\rightarrow\infty} f^n(K)=X$ ($\lim\limits_{n\rightarrow\infty} f^n(K)=X$) with respect to the metric $D_H$.\\

Let $(X,d)$ be a compact metric space and let $C(X)$ denote the collection
of continuous self maps on $X$. For any $f,g\in C(X)$, define,

\centerline{$ D (f,g) = \sup \limits_{x\in X} d(f(x),g(x))$}

It is easily seen that $D$ defined above is a metric on $C(X)$ and is known as the \textit{Supremum metric}. It can be seen that a sequence $(f_n)$ in $C(X)$ converges to $f$ in $(C(X),D)$ if and only if $f_n$ converges to $f$ uniformly on $X$ and hence the topology generated by the metric defined above is known as the topology of uniform convergence. A collection of sequences $\{(f_j^i) : i\in \mathfrak{I}\}$ converges collectively to $\{g^i: i\in\mathfrak{I}\}$ with respect to the metric $D$ if for each $\epsilon>0$, there exists $n_0\in\mathbb{N}$ such that $D(f_j^i,g^i)<\epsilon~~\forall j\geq n_0,~~i\in\mathfrak{I}$.

\section{Main Results}

Throughout this section, the family $\mathbb{F}$ and the limit map $f$ are assumed to be surjective.

\begin{Proposition}\label{ap1}
Let $(X,\mathbb{F})$ be a non-autonomous system generated by $\mathbb{F}$ and let $f$ be any continuous self map on $X$. If the family $\mathbb{F}$ commutes with $f$ then for any $x\in X$ and any $k\in \mathbb{N}$, $d(\omega_{k}(x),f^{k}(x))\leq \sum \limits_{i=1}^k D(f_{i},f)$.
\end{Proposition}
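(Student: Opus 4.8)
The plan is to interpolate between $f^k$ and $\omega_k$ through a telescoping chain of intermediate composite maps, each obtained from the previous one by swapping a single copy of $f$ for the corresponding $f_i$, and then to control each one-step discrepancy by the supremum distance $D(f_i,f)$. Concretely, I would introduce the maps $P_j = \omega_j \circ f^{k-j}$ for $j = 0,1,\dots,k$, with the convention that $\omega_0$ is the identity, so that $P_0 = f^k$ and $P_k = \omega_k$. The triangle inequality then gives $d(\omega_k(x),f^k(x)) \le \sum_{j=1}^{k} d(P_j(x),P_{j-1}(x))$, reducing the problem to estimating each consecutive difference $d(P_j(x),P_{j-1}(x))$.

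The crucial step is to show $d(P_j(x),P_{j-1}(x)) \le D(f_j,f)$, and this is exactly where the commutativity hypothesis enters. Writing $P_j = f_j \circ \omega_{j-1} \circ f^{k-j}$ and $P_{j-1} = \omega_{j-1} \circ f \circ f^{k-j}$, I would first observe that since each $f_i$ commutes with $f$, so does the composite $\omega_{j-1} = f_{j-1} \circ \cdots \circ f_1$; hence $\omega_{j-1} \circ f = f \circ \omega_{j-1}$, and therefore $P_{j-1} = f \circ \omega_{j-1} \circ f^{k-j}$. Setting $Q = \omega_{j-1} \circ f^{k-j}$, the maps $P_j$ and $P_{j-1}$ now differ only in their outermost map applied to the common point $Q(x)$, so that $d(P_j(x),P_{j-1}(x)) = d(f_j(Q(x)),f(Q(x))) \le \sup_{y \in X} d(f_j(y),f(y)) = D(f_j,f)$. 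Summing over $j$ then yields the asserted bound.

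I expect the main obstacle to be conceptual rather than computational: because $f$ and the maps $f_n$ are only assumed continuous, and in particular need not be Lipschitz or nonexpansive, a direct induction that peels off the outermost (or innermost) map fails, since it would force one to apply a map to an already-accumulated error with no contraction estimate available to absorb it. The commutativity hypothesis is precisely what repairs this difficulty, since it allows the single extra copy of $f$ removed at stage $j$ to be slid out to the outermost position, where it sits alongside $f_j$ with both maps acting on the same argument $Q(x)$, converting each telescoping step into a clean $D(f_j,f)$ estimate. The only points needing genuine care are the inductive verification of the commutation identity $\omega_{j-1} \circ f = f \circ \omega_{j-1}$ from the per-map hypothesis, and the checking of the boundary cases $P_0 = f^k$ and $P_k = \omega_k$.
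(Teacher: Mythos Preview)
Your proposal is correct and is essentially the same argument as the paper's, just packaged differently: the paper proceeds by induction on $k$, at each step splitting $d(\omega_{n+1}(x),f^{n+1}(x))\le D(f_{n+1},f)+d(f\circ\omega_n(x),f^{n+1}(x))$ and then invoking commutativity to rewrite the second term as $d(\omega_n(f(x)),f^n(f(x)))$ so that the inductive hypothesis applies at the point $f(x)$. Unwinding that induction produces precisely your telescoping chain through the intermediate maps $P_j=\omega_j\circ f^{k-j}$, and the commutation $\omega_{j-1}\circ f=f\circ\omega_{j-1}$ is used identically in both versions.
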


\begin{proof}
Let $x\in X$ and $n$ be a natural number. As $f_k$'s commute with $f$, $d(f_2\circ f_1(x),f^2(x))\leq d(f_{2}\circ f_{1}(x),f\circ f_{1}(x))+ d(f \circ f_{1}(x),f\circ f(x))= d(f_{2}\circ f_{1}(x),f\circ f_{1}(x))+ d(f_1 \circ f(x),f\circ f(x))\leq D(f_{2},f)+D(f_{1},f)$. Proceeding inductively, if $d(f_{n}\circ f_{n-1}\circ\ldots \circ f_{1}(x),f^{n}(x))\leq \sum \limits_{i=1}^n D(f_{i},f)$, then, $d(f_{n+1}\circ f_n\circ\ldots\circ f_{1}(x),f^{n+1}(x))\leq d(f_{n+1}\circ f_n\circ\ldots\circ f_{1}(x),f\circ f_n\circ\ldots\circ f_{1}(x))+ d(f\circ f_n\circ\ldots\circ f_{1}(x),f^{n+1}(x))= d(f_{n+1}\circ f_n\circ\ldots\circ f_{1}(x),f\circ f_n\circ\ldots\circ f_{1}(x)+ d(f_{n}\circ f_{n-1}\circ \ldots\circ f_{1}\circ f(x),f^n\circ f(x)) \leq D(f_{n+1},f)+ \sum \limits_{i=1}^n D(f_{i},f)$ (by induction). Hence for any $k\in \mathbb{N}$, $d(f_{k}\circ f_{k-1}\circ\ldots \circ f_{1}(x),f^{k}(x))\leq \sum \limits_{i=1}^k D(f_{i},f)$ or $d(\omega_{k}(x),f^{k}(x))\leq \sum \limits_{i=1}^k D(f_{i},f)$.
\end{proof}

\begin{Remark}
The above result captures the deviations in the orbit of the non-autonomous system, when approximated by an autonomous system $(X,f)$. Consequently, the result provides an upper bound for the error in the approximation of the trajectory of the non-autonomous system, when approximated using the system $(X,f)$. It may be noted that as error after $k$ iterations is bounded by $\sum \limits_{i=1}^k D(f_{i},f)$, if $(f_n)$ converges to $f$ with order more than $O(\frac{1}{n^r})~~(r>1)$, the total error for the system remains bounded. Further, if $(f_n)$ converges faster than $O(\frac{1}{n^r})$, approximating the trajectory after its observation for finite time leads to a better approximation. In particular, an arbitrarily small bound for the error can be obtained if approximation process is initiated after finitely many iterations. Thus, we get the following corollary.
\end{Remark}

\begin{Cor}\label{apcor}
Let $(X,\mathbb{F})$ be a non-autonomous system generated by $\mathbb{F}$ and let $f$ be any continuous self map on $X$. If the family $\mathbb{F}$ commutes with $f$ then for any $x\in X$ and any $k,n\in \mathbb{N}$, $d(\omega_{n+k}(x),f^{k}(\omega_n(x)))\leq \sum \limits_{i=1}^k D(f_{n+i},f)$
\end{Cor}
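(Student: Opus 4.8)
The plan is to reduce the statement to Proposition \ref{ap1} by a reindexing argument. Observe first that $\omega_{n+k}(x) = \omega^n_{n+k}(\omega_n(x))$, where $\omega^n_{n+k} = f_{n+k} \circ \cdots \circ f_{n+1}$ is exactly the composite that governs the evolution of the system after the first $n$ iterations. Hence the quantity $d(\omega_{n+k}(x), f^k(\omega_n(x)))$ measures, over $k$ steps, precisely the deviation between the non-autonomous evolution driven by the tail $(f_{n+1}, f_{n+2}, \ldots)$ and the autonomous evolution driven by $f$, both started from the common seed $y := \omega_n(x)$.

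Accordingly, I would introduce the shifted family $\mathbb{G} = (g_i)_{i \in \mathbb{N}}$ defined by $g_i = f_{n+i}$. Since each $f_{n+i}$ commutes with $f$ by hypothesis, the family $\mathbb{G}$ commutes with $f$ termwise, so the hypotheses of Proposition \ref{ap1} are met by the system $(X,\mathbb{G})$ (its members are continuous self maps of $X$, being members of $\mathbb{F}$). Writing $\omega^{\mathbb{G}}_k$ for the $k$-th composite of $\mathbb{G}$, one checks directly that $\omega^{\mathbb{G}}_k(y) = g_k \circ \cdots \circ g_1(y) = f_{n+k} \circ \cdots \circ f_{n+1}(y)$, which with $y = \omega_n(x)$ equals $\omega_{n+k}(x)$.

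Invoking Proposition \ref{ap1} for $(X,\mathbb{G})$ at the point $y$ then yields $d(\omega^{\mathbb{G}}_k(y), f^k(y)) \leq \sum_{i=1}^k D(g_i, f) = \sum_{i=1}^k D(f_{n+i}, f)$, and substituting the identifications $\omega^{\mathbb{G}}_k(y) = \omega_{n+k}(x)$ and $f^k(y) = f^k(\omega_n(x))$ gives exactly the asserted bound.

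I do not expect a serious obstacle here; the content is essentially bookkeeping. The only points needing a moment's care are confirming that the tail family $\mathbb{G}$ inherits the commuting hypothesis (immediate, since it holds term by term) and that $\omega^{\mathbb{G}}_k$ applied to the seed $\omega_n(x)$ reproduces $\omega_{n+k}(x)$. Alternatively, if one prefers to avoid the reindexing device, the same inequality follows by repeating verbatim the telescoping induction of Proposition \ref{ap1} — inserting $f$ one factor at a time and using commutativity to slide $f$ leftward through the composition — but beginning the chain from $f^k(\omega_n(x))$ instead of $f^k(x)$, which directly produces the partial sums $\sum_{i=1}^k D(f_{n+i}, f)$.
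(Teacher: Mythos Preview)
Your proposal is correct and is precisely the approach the paper takes: the paper's one-line proof (``approximating the trajectory of $x_0=\omega_n(x)$ for $k$ iterations when $\mathbb{F}= \{f_{n+1}: n\in \mathbb{N} \}$'') is exactly your reindexing argument applying Proposition~\ref{ap1} to the shifted family $\mathbb{G}=(f_{n+i})_{i\geq 1}$ with seed $y=\omega_n(x)$. You have simply written out in full the details the paper leaves implicit.
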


\begin{proof}
The result is trivial by approximating the trajectory of $x_0=\omega_n(x)$ for $k$ iterations when $\mathbb{F}= \{f_{n+1}: n\in \mathbb{N} \}$.
\end{proof}

\begin{Remark}
The corollary above establishes that if the family $\mathbb{F}$ commutes with $f$ then for any $k,n\in\mathbb{N}$, $D(\omega_{n+k},f^k(\omega_n))\leq \sum \limits_{i=1}^k D(f_{n+i},f)$ or in other words, $D(\omega^n_{n+k}(\omega_n),f^k(\omega_n))\leq \sum \limits_{i=1}^k D(f_{n+i},f)~~\forall (k,n)\in\mathbb{N}^2$. Consequently, if $f_n$ converges to $f$ at a sufficiently fast rate and each $f_i$ is surjective then $\omega^n_{n+k}$ converges to $f^k$ collectively (with respect to the metric $D$). It may be noted that if $f_n$ converges uniformly to $f$ then for each $k\in\mathbb{N}$, $\omega^n_{n+k}$ converges uniformly to $f^k$. However, if the family $\mathbb{F}$ commutes with $f$ the convergence is collective and hence a much stronger form of convergence can be concluded which need not hold good in general . Hence we get the following corollary.
\end{Remark}

\begin{Cor}\label{apcor2}
Let $(X,\mathbb{F})$ be a non-autonomous system generated by a family $\mathbb{F}$ and let $f$ be any continuous self map on $X$. If the family $\mathbb{F}$ commutes with $f$ and $\sum \limits_{n=1}^{\infty} D(f_n,f) < \infty$ then $\{(\omega^n_{n+k}): k\in\mathbb{N}\}$ converges to $\{f^k:k\in\mathbb{N}\}$ collectively (with respect to the metric $D$).
\end{Cor}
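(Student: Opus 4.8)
The plan is to read off the desired statement directly from Corollary~\ref{apcor} by converting its pointwise orbit estimate into a supremum-metric estimate via surjectivity, and then to observe that the resulting bound is a tail of the convergent series $\sum_n D(f_n,f)$ which is independent of $k$. Recall that the index set in the definition of collective convergence is here $\mathfrak{I}=\mathbb{N}$ indexed by $k$, with $n$ playing the role of the sequence variable; so what must be shown is that for every $\epsilon>0$ there is an $n_0$ with $D(\omega^n_{n+k},f^k)<\epsilon$ for all $n\geq n_0$ \emph{and all $k\in\mathbb{N}$ simultaneously}.

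First I would invoke Corollary~\ref{apcor}, which under the commutativity hypothesis gives, for every $x\in X$ and all $k,n\in\mathbb{N}$, the bound $d(\omega_{n+k}(x),f^k(\omega_n(x)))\leq \sum_{i=1}^k D(f_{n+i},f)$. Writing $\omega_{n+k}=\omega^n_{n+k}\circ\omega_n$, this is exactly
$$d\bigl(\omega^n_{n+k}(\omega_n(x)),\,f^k(\omega_n(x))\bigr)\leq \sum_{i=1}^k D(f_{n+i},f).$$
The crucial step is now to remove the dependence on $\omega_n(x)$: since every $f_i$ is surjective (the standing assumption of the section), the composition $\omega_n=f_n\circ\cdots\circ f_1$ is surjective, so as $x$ ranges over $X$ the point $y=\omega_n(x)$ ranges over all of $X$. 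Hence the estimate holds for arbitrary $y\in X$, and taking the supremum over $y$ yields
$$D(\omega^n_{n+k},f^k)\leq \sum_{i=1}^k D(f_{n+i},f).$$

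Finally I would bound the right-hand side by the full tail $\sum_{i=1}^k D(f_{n+i},f)=\sum_{j=n+1}^{n+k}D(f_j,f)\leq \sum_{j=n+1}^{\infty}D(f_j,f)$, a quantity that is independent of $k$ and, because $\sum_{n=1}^{\infty}D(f_n,f)<\infty$, tends to $0$ as $n\to\infty$. Thus, given $\epsilon>0$, choosing $n_0$ with $\sum_{j=n_0+1}^{\infty}D(f_j,f)<\epsilon$ forces $D(\omega^n_{n+k},f^k)<\epsilon$ for every $n\geq n_0$ and every $k$, which is precisely collective convergence. The only step that is not purely formal is the passage from the orbit estimate to the supremum-metric estimate, and that is exactly where surjectivity of the $f_i$ (hence of $\omega_n$) is indispensable: without it one controls the deviation only along the reachable states $\omega_n(X)$, not on all of $X$. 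The uniformity in $k$ that separates collective convergence from mere uniform convergence for each fixed $k$ then comes for free, since the controlling tail bound does not involve $k$.
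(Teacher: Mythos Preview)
Your proof is correct and follows essentially the same route as the paper: invoke Corollary~\ref{apcor} to get the pointwise estimate, use surjectivity of each $f_i$ (hence of $\omega_n$) to upgrade it to $D(\omega^n_{n+k},f^k)\leq\sum_{i=1}^k D(f_{n+i},f)$, and then bound by the tail of the convergent series to obtain a threshold independent of $k$. Your explicit remarks on where surjectivity is used and why uniformity in $k$ is automatic are a nice addition, but the argument itself is the paper's.
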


\begin{proof}
If the family $\mathbb{F}$ commutes with $f$, by corollary \ref{apcor} we have $D(\omega^n_{n+k}(\omega_n),f^k(\omega_n))\leq \sum \limits_{i=1}^k D(f_{n+i},f)$. Consequently, if the family $\mathbb{F}$ is surjective then each $\omega_n$ is surjective and hence  $D(\omega^n_{n+k},f^k)\leq \sum \limits_{i=1}^k D(f_{n+i},f)$. Further, if $\sum \limits_{n=1}^{\infty} D(f_n,f) < \infty$, then for each $\epsilon>0$ there exists $r\in\mathbb{N}$ such that $\sum \limits_{i=1}^{\infty} D(f_{r+i},f)<\epsilon$ which implies $D(\omega^n_{n+k},f^k)<\epsilon~~ \forall k\in\mathbb{N}, n\geq r$ and hence the convergence is collective.
\end{proof}

\begin{Remark}
The corollary establishes that if the family $\mathbb{F}$ commutes with $f$ and $\sum\limits_{n=1}^{\infty} D(f_n,f)<\infty$ then $\{(\omega^n_{n+k}):k\in\mathbb{N}\}$ converges collectively to $\{f^k:k\in\mathbb{N}\}$ with respect to the metric $D$. However the proof uses the fact that the stated conditions ensure that for each $\epsilon>0$ there exists $n_0\in\mathbb{N}$ such that $d(\omega_{n+k}(x),f^{k}(\omega_n(x)))<\epsilon$ for any $k\in \mathbb{N}, n\geq n_0$ and $x\in X$,  which in turn ensures the collective convergence. Conversely, if $\{\omega^n_{n+k}:k\in\mathbb{N}\}$ converges collectively then for each $\epsilon>0$ there exists $n_0\in\mathbb{N}$ such that $D(\omega^n_{n+k},f^k)<\epsilon$ for all $k\in\mathbb{N}$ and $n\geq n_0$. Further, if each $f_i$ is surjective then $\omega_n$ are surjective and hence $d(\omega^n_{n+k}(\omega_n(x)),f^k(\omega_n(x)))<\epsilon$ for all $x\in X,~~k\in\mathbb{N}$ and $n\geq n_0$ or $d(\omega_{n+k}(x),f^n(\omega_n(x)))<\epsilon$ for all $x\in X,~~k\in\mathbb{N}$ and $n\geq n_0$ and hence two statements are equivalent.
\end{Remark}
%The corollary establishes that if $(f_n)$ converge to $f$ at a sufficiently fast rate, approximating the trajectory of the system after observing for finite time reduces the error and hence leads to a better approximation. As a result, the trajectory of the non-autonomous system can be approximated using an autonomous system with sufficient accuracy (and vice-versa) if the system is observed initially for sufficient amount of time. Consequently, various forms of mixing properties (like transitivity, weakly mixing and topological mixing) and sensitivities (like sensitivity and cofinite sensitivity) are expected to be equivalent for the two systems. It may be noted that as it is established that trajectory of either of the systems can be traced using the other (under stated conditions), it is expected that the dynamical behavior of the two systems is deeply related and such results do not come as a surprise. We now establish the results stated.
%\end{Remark}

\begin{Proposition}\label{eq}
Let $(X,\mathbb{F})$ be a non-autonomous system generated by a family $\mathbb{F}$ commuting with $f$. If $\sum \limits_{n=1}^{\infty} D(f_n,f) < \infty$ then $(X,f)$ is equicontinuous $\Rightarrow$ $(X,\mathbb{F})$ is equicontinuous. Further, if $f_i's$ are bijective then, $(X,\mathbb{F})$ is equicontinuous $\Rightarrow (X,f)$ is equicontinuous.
\end{Proposition}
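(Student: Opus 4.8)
The plan is to build everything on the deviation estimate of Corollary~\ref{apcor}, namely $d(\omega_{n+k}(x),f^{k}(\omega_n(x)))\le \sum_{i=1}^{k}D(f_{n+i},f)$, together with the fact that $\sum_{n=1}^{\infty}D(f_n,f)<\infty$ forces the tail $\sum_{i=N+1}^{\infty}D(f_i,f)$ to tend to $0$ as $N\to\infty$. I expect the \emph{main obstacle} to be that the obvious approach fails: inserting $f^{n}(x),f^{n}(y)$ by the triangle inequality and applying Proposition~\ref{ap1} only yields $d(\omega_n(x),f^{n}(x))\le\sum_{i=1}^{n}D(f_i,f)\le M$, where $M=\sum_{i=1}^{\infty}D(f_i,f)$ is a fixed constant that need not be small. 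The way around this is to fix a large cut-off $N$, dispose of the first $N$ iterates separately (the maps $\omega_1,\dots,\omega_N$ are finitely many uniformly continuous maps on a compact space, hence equicontinuous for free), and for $n>N$ compare $\omega_n$ with the $f$-orbit of the already-evolved point $\omega_N(\cdot)$, where the error is only the small tail.

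For the forward implication I would proceed as follows. Given $\epsilon>0$, first choose $N$ with $\sum_{i=N+1}^{\infty}D(f_i,f)<\epsilon/3$. Let $\delta_1$ be a modulus of equicontinuity for $(X,f)$ at level $\epsilon/3$, let $\delta_2$ be a modulus of uniform continuity of the finite composition $\omega_N$ for target $\delta_1$, let $\delta_3$ handle the finitely many maps $\omega_1,\dots,\omega_N$ at level $\epsilon$, and set $\delta=\min\{\delta_2,\delta_3\}$. For $n\le N$ the conclusion is immediate from $\delta_3$. For $n=N+k$ with $k\ge 1$ I would write
\[
d(\omega_{N+k}(x),\omega_{N+k}(y))\le d(\omega_{N+k}(x),f^{k}(\omega_N(x)))+d(f^{k}(\omega_N(x)),f^{k}(\omega_N(y)))+d(f^{k}(\omega_N(y)),\omega_{N+k}(y)).
\]
By Corollary~\ref{apcor} the first and third terms are each at most $\sum_{i=N+1}^{\infty}D(f_i,f)<\epsilon/3$, while $d(x,y)<\delta_2$ gives $d(\omega_N(x),\omega_N(y))<\delta_1$, so equicontinuity of $(X,f)$ bounds the middle term by $\epsilon/3$; hence $d(\omega_n(x),\omega_n(y))<\epsilon$ for all $n$.

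For the reverse implication bijectivity enters precisely to let us invert. Since each $f_i$ is a continuous bijection of the compact space $X$, each $\omega_N$ is a homeomorphism, so $\omega_N^{-1}$ is uniformly continuous and $\omega^{N}_{N+k}=\omega_{N+k}\circ\omega_N^{-1}$; surjectivity (already standing) upgrades the pointwise estimate to $D(\omega^{N}_{N+k},f^{k})\le\sum_{i=N+1}^{\infty}D(f_i,f)$ exactly as in Corollary~\ref{apcor2}. Given $\epsilon>0$, I would pick $N$ with this tail $<\epsilon/3$, take $\delta'$ witnessing equicontinuity of $(X,\mathbb{F})$ at level $\epsilon/3$, and let $\delta$ be a modulus of uniform continuity of $\omega_N^{-1}$ for target $\delta'$. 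Then for every $k$,
\[
d(f^{k}(x),f^{k}(y))\le d(f^{k}(x),\omega^{N}_{N+k}(x))+d(\omega_{N+k}(\omega_N^{-1}(x)),\omega_{N+k}(\omega_N^{-1}(y)))+d(\omega^{N}_{N+k}(y),f^{k}(y)),
\]
where the outer terms are $<\epsilon/3$ by the $D$-estimate, and the middle term is $<\epsilon/3$ since $d(x,y)<\delta$ forces $d(\omega_N^{-1}(x),\omega_N^{-1}(y))<\delta'$, to which equicontinuity of $(X,\mathbb{F})$ applies for the maps $\omega_{N+k}$. This yields equicontinuity of $(X,f)$ and completes the argument; the only genuinely new ingredient beyond the forward direction is the homeomorphism property of $\omega_N$, which is exactly why bijectivity of the $f_i$ is assumed here.
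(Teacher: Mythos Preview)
Your proof is correct and follows essentially the same approach as the paper's: both directions fix a cut-off index where the tail $\sum_{i>N}D(f_i,f)$ is small, split via the same three-term triangle inequality $d(\omega_{N+k}(\cdot),f^{k}(\omega_N(\cdot)))$, use Corollary~\ref{apcor} for the outer terms, invoke (uniform) continuity of $\omega_N$ (respectively $\omega_N^{-1}$) together with equicontinuity of the other system for the middle term, and dispose of the first $N$ iterates by finiteness. The only cosmetic difference is that in the converse you phrase things via $\omega^{N}_{N+k}=\omega_{N+k}\circ\omega_N^{-1}$ and the sup-metric estimate, whereas the paper names the preimage points $x_r=\omega_r^{-1}(x)$, $y_r=\omega_r^{-1}(y)$ explicitly; the underlying computation is identical.
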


\begin{proof}
Let $(X,f)$ be equicontinuous and let $\epsilon>0$ be given. By equicontinuity, there exists $\delta>0$ such that $d(x,y)<\delta$ implies $d(f^n(x),f^n(y))<\frac{\epsilon}{3}$ for all $n\in\mathbb{N}$.  As $\sum \limits_{n=1}^{\infty} D(f_n,f) < \infty$, choose $r\in\mathbb{N}$ such that $\sum \limits_{n=r}^{\infty} D(f_n,f) <\frac{\epsilon}{3}$ and hence for any $x\in X$, by corollary \ref{apcor}~~ we have, $d(\omega_{r+k}(x),f^k(\omega_r(x)))<\sum \limits_{i=1}^k D(f_{r+i},f)<\frac{\epsilon}{3}$. As $\omega_r$ is continuous, choose $\eta_r>0$ such that $d(x,y)<\eta_r$ implies $d(\omega_r(x),\omega_r(y))<\delta$ and hence $d(f^k(\omega_r(x)),f^k(\omega_r(y)))<\frac{\epsilon}{3}$ for all $k\in\mathbb{N}$. By triangle inequality, $d(x,y)<\eta_r$ implies $d(\omega_{r+k}(x), \omega_{r+k}(y))<\epsilon$ for all $k\in \mathbb{N}$ or $d(\omega_{k}(x),\omega_{k}(y))<\epsilon$ for all $k\geq r+1$. Further, as $\{\omega_1,\omega_2,\ldots,\omega_r\}$ is a finite set, there exists $\eta_r'>0$ such that $d(x,y)<\eta_r'$ forces $d(\omega_i(x),\omega_i(y))<\epsilon$ for $i=1,2,\ldots,r$ and hence choosing $\eta=\min \{\eta_r,\eta_r'\}$ ensures equicontinuity of  $(X,\mathbb{F})$.

%Let $(X,f)$ be equicontinuous and let $\epsilon>0$ be given. By equicontinuity, there exists $\delta>0$ such that $d(x,y)<\delta$ implies $d(f^n(x),f^n(y))<\frac{\epsilon}{3}$ for all $n\in\mathbb{N}$. Let $k\in\mathbb{N}$ be fixed. As $\omega^n_{n+k}$ converges uniformly to $f^k$, there exists $r\in\mathbb{N}$ such that $D(\omega^n_{n+k},f^k)<\frac{\epsilon}{3}~~\forall n\geq r$. Thus, $D(\omega^{r}_{r+k},f^k)<\frac{\epsilon}{3}$ or in other words $D(\omega^{r}_{r+k}(\omega_{r}(x)),f^k(\omega_{r}(x)))<\frac{\epsilon}{3}~~\forall x\in X$. As $\omega_r$ is continuous, choose $\eta_r>0$ such that $d(x,y)<\eta_r$ implies $d(\omega_r(x),\omega_r(y))<\delta$ and hence $d(f^n(\omega_r(x)),f^n(\omega_r(y)))<\frac{\epsilon}{3}$ for all $n\in\mathbb{N}$. By triangle inequality, $d(x,y)<\eta_r$ implies $d(\omega_{r+n}(x), \omega_{r+n}(y))<\epsilon$ for all $n\in \mathbb{N}$ or $d(\omega_{n}(x),\omega_{n}(y))<\epsilon$ for all $n\geq r+1$. Further, as $\{\omega_1,\omega_2,\ldots,\omega_r\}$ is a finite set, there exists $\eta_r'>0$ such that $d(x,y)<\eta_r'$ forces $d(\omega_i(x),\omega_i(y))<\epsilon$ for $i=1,2,\ldots,r$ and hence choosing $\eta=\min \{\eta_r,\eta_r'\}$ ensures equicontinuity of  $(X,\mathbb{F})$.

Conversely, let $f_i$'s be bijective and let $\epsilon>0$ be given. By equicontinuity of $(X,\mathbb{F})$, there exists $\delta>0$ such that $d(x,y)<\delta$ implies $d(\omega_n(x),\omega_n(y))<\frac{\epsilon}{3}$ for all $n\in\mathbb{N}$. As $\sum \limits_{n=1}^{\infty} D(f_n,f) < \infty$, choose $r\in\mathbb{N}$ such that $\sum \limits_{n=r}^{\infty} D(f_n,f) <\frac{\epsilon}{3}$. Also bijectivity of $f_i$ implies $\omega_r$ is a homeomorphism and thus there exists $\eta_r>0$ such that $d(x,y)<\eta_r$ implies $d(\omega_r^{-1}(x), \omega_r^{-1}(y))<\delta$. Let $x,y\in X$ such that $d(x,y)<\eta_r$. Let $x_r=\omega_r^{-1}(x)$ and $y_r=\omega_r^{-1}(y)$. It may be noted that $d(f^k(x),f^k(y))=d(f^k(\omega_r(x_r)),f^k(\omega_r(y_r)))\leq d(f^k(\omega_r(x_r)), \omega_{r+k}(x_r))+ d(\omega_{r+k}(x_r), \omega_{r+k}(y_r))+d(\omega_{r+k}(y_r),f^k(\omega_r(y_r)))$. By corollary \ref{apcor},~~ $d(\omega_{r+k}(x_r), f^k(\omega_r(x_r)))\leq \sum \limits_{i=1}^k D(f_{r+i},f)<\frac{\epsilon}{3}$ and $d(\omega_{r+k}(y_r), f^k(\omega_r(y_r)))\leq\sum\limits_{i=1}^k D(f_{r+i},f)<\frac{\epsilon}{3}$. Also $d(x,y)<\eta_r$ implies $d(x_r,y_r)<\delta$ and hence by equicontinuity we have $d(\omega_{r+k}(x_r),\omega_{r+k}(y_r))<\frac{\epsilon}{3}$. By triangle inequality $d(f^k(\omega_r(x_r)),f^k(\omega_r(y_r)))<\epsilon$ or $d(f^k(x),f^k(y))<\epsilon$. As the proof holds for any $k\in\mathbb{N}$, $(X,f)$ is equicontinuous.
\end{proof}

\begin{Remark}
The above result establishes the relation between equicontinuity of the two systems $(X,f)$ and $(X,\mathbb{F})$. While a "sufficiently fast rate of convergence" is enough to establish the equicontinuity of $(X,\mathbb{F})$ from equicontinuity of $(X,f)$, the proof uses additional assumption of bijectivity of the generating functions $f_i$ to establish the converse. However, it may be noted that the converse part does not use the bijectivity completely and the result holds good under a much weaker assumption. In particular, if every pair of nearby points can be obtained from nearby points, a similar proof of the converse holds good and the equicontinuity of the two systems is still equivalent. We say that a system $(X,\mathbb{F})$ exhibits "nearness criteria" if for each $\epsilon>0$, there exists $\delta >0$, such that for every pair of points $x,y\in X$ with $d(x,y)<\delta$, there exists $u,v\in X$ and $r\in\mathbb{N}$ such that $\omega_r(u)=x,\omega_r(v)=y$ and $d(u,v)<\epsilon$. The definition is analogous to the continuity of the inverse and is equivalent to continuity of each fibre of the inverse. In view of this definition, the above result yields the following corollary.
\end{Remark}

\begin{Cor}\label{eq2}
Let $(X,\mathbb{F})$ be a non-autonomous system generated by a family $\mathbb{F}$ commuting with $f$. If $(X,\mathbb{F})$ satisfies nearness criteria and $\sum \limits_{n=1}^{\infty} D(f_n,f) < \infty$ then $(X,f)$ is equicontinuous $\Leftrightarrow$ $(X,\mathbb{F})$ is equicontinuous.
\end{Cor}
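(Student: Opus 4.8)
The plan is to treat the two implications separately, since only the converse carries any content beyond what is already known. The forward implication, $(X,f)$ equicontinuous $\Rightarrow (X,\mathbb{F})$ equicontinuous, is established in the first half of the proof of Proposition \ref{eq} and uses neither bijectivity nor the nearness criteria (only equicontinuity of $f$, the tail control from $\sum D(f_n,f)<\infty$, continuity of the single map $\omega_r$, and finiteness of $\{\omega_1,\dots,\omega_r\}$). Hence that direction may be quoted verbatim, and the whole point of the corollary is to re-run the converse half of Proposition \ref{eq} with the nearness criteria in place of bijectivity of the $f_i$.

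For the converse I would mirror the second half of the proof of Proposition \ref{eq}. Fix $\epsilon>0$. Equicontinuity of $(X,\mathbb{F})$ yields $\delta>0$ with $d(u,v)<\delta \Rightarrow d(\omega_n(u),\omega_n(v))<\frac{\epsilon}{3}$ for all $n$, and convergence of $\sum D(f_n,f)$ lets me fix $r$ so large that $\sum_{i=1}^{\infty} D(f_{r+i},f)<\frac{\epsilon}{3}$. In Proposition \ref{eq} bijectivity entered at exactly one place: to pass from nearby $x,y$ to nearby preimages $x_r=\omega_r^{-1}(x)$, $y_r=\omega_r^{-1}(y)$. The nearness criteria is tailored to supply precisely such preimages: applied with tolerance $\delta$ it produces $\eta>0$ so that $d(x,y)<\eta$ forces the existence of $u,v$ with $\omega_r(u)=x$, $\omega_r(v)=y$ and $d(u,v)<\delta$. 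With these $u,v$ the same three-term decomposition $d(f^k(x),f^k(y))\leq d(f^k(\omega_r(u)),\omega_{r+k}(u))+d(\omega_{r+k}(u),\omega_{r+k}(v))+d(\omega_{r+k}(v),f^k(\omega_r(v)))$ goes through: Corollary \ref{apcor} bounds the two outer terms by the tail $\sum_{i=1}^k D(f_{r+i},f)<\frac{\epsilon}{3}$, and equicontinuity of $(X,\mathbb{F})$ bounds the middle term by $\frac{\epsilon}{3}$, whence $d(f^k(x),f^k(y))<\epsilon$ for every $k$ and $(X,f)$ is equicontinuous.

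The main obstacle I expect is the coordination of the index $r$. The tail estimate forces $r$ to be large, whereas the nearness criteria as literally stated returns preimages under some $\omega_r$ whose index is only asserted to exist and could a priori depend on the pair $(x,y)$ and be small, in which case $\sum_{i=1}^k D(f_{r+i},f)$ need not be small. To close the gap I must read the criterion as furnishing nearby preimages at the fixed large level $r$ already chosen for tail control; this is exactly the content of the phrase \emph{continuity of each fibre of the inverse} from the preceding remark applied at level $r$, and it is consistent with the standing surjectivity of $\mathbb{F}$, which guarantees that preimages under $\omega_r$ exist for every prescribed $r$. Once this matching of indices is secured, no new estimate is required, since all constants were split into thirds exactly as in Proposition \ref{eq}.
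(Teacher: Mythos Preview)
Your proposal is correct and follows exactly the route the paper intends: the Remark preceding the corollary says only that ``a similar proof of the converse holds good,'' and your argument is precisely that similar proof, re-running the second half of Proposition \ref{eq} with the nearness criteria supplying the nearby preimages in place of $\omega_r^{-1}$. Your explicit flagging of the index-coordination issue (that the $r$ produced by the nearness criteria must match the $r$ chosen for tail control) is in fact more careful than the paper, which leaves this point implicit in the phrase ``continuity of each fibre of the inverse''; your reading of the criterion as applying at each fixed level is the one the authors evidently intend.
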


\begin{Proposition}\label{min}
Let $(X,\mathbb{F})$ be a non-autonomous system generated by a sequence $\mathbb{F}$ commuting with $f$. If $\sum \limits_{n=1}^{\infty} D(f_n,f) < \infty$ then $(X,f)$ is minimal $\Leftrightarrow$ $(X,\mathbb{F})$ is minimal.
\end{Proposition}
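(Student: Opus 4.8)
The plan is to prove both implications through the characterization that (on a compact space, with surjective maps) a system is minimal precisely when every forward orbit is dense, and then to transfer density of orbits between the two systems using the tail estimate of Corollary \ref{apcor}. Write $t_N = \sum\limits_{i=1}^{\infty} D(f_{N+i},f)$; since $\sum\limits_{n=1}^{\infty} D(f_n,f)<\infty$ we have $t_N \to 0$ as $N\to\infty$, and Corollary \ref{apcor} gives $d(\omega_{N+k}(x), f^k(\omega_N(x))) \le t_N$ for every $x\in X$ and all $k\in\mathbb{N}$. Thus, from any time $N$ onward, the $\mathbb{F}$-trajectory of $x$ shadows the $f$-trajectory of the state $\omega_N(x)$ to within $t_N$, and this is the single tool that drives both directions.

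For the forward implication, I would assume $(X,f)$ is minimal and fix $x\in X$ together with a target $v$ and $\epsilon>0$. First choose $N$ with $t_N<\epsilon/2$. Since $(X,f)$ is minimal, the $f$-orbit of the point $\omega_N(x)$ is dense, so there is $k\ge 0$ with $d(f^k(\omega_N(x)),v)<\epsilon/2$; the triangle inequality together with the shadowing estimate then gives $d(\omega_{N+k}(x),v)<\epsilon$. Hence the $\mathbb{F}$-orbit of $x$ is dense and $(X,\mathbb{F})$ is minimal. Note this direction uses the commutativity (through Corollary \ref{apcor}) and the summability (through $t_N\to0$), but not bijectivity of the $f_i$.

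For the converse I would run the same shadowing in reverse, and here surjectivity of $\mathbb{F}$ is the crucial ingredient. Assume $(X,\mathbb{F})$ is minimal and fix $z\in X$, a target $v$ and $\epsilon>0$; the goal is some $k\ge0$ with $d(f^k(z),v)<\epsilon$. Choosing $N$ with $t_N<\epsilon/2$, I would use surjectivity of $\omega_N$ (which holds because each $f_i$ is surjective) to select $x$ with $\omega_N(x)=z$. Minimality of $(X,\mathbb{F})$ makes the orbit of $x$ dense, so some time $m\ge N$ satisfies $d(\omega_m(x),v)<\epsilon/2$; writing $m=N+k$ and invoking $d(\omega_{N+k}(x),f^k(\omega_N(x)))=d(\omega_{N+k}(x),f^k(z))\le t_N$ yields $d(f^k(z),v)<\epsilon$. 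Letting $\epsilon\to0$ shows $v\in\overline{\{f^k(z):k\ge0\}}$, and as $v,z$ are arbitrary, $(X,f)$ is minimal.

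The main obstacle is precisely this converse step. The naive attempt to compare the $f$-orbit and the $\mathbb{F}$-orbit of the \emph{same} point $z$ fails, since Proposition \ref{ap1} only bounds $d(\omega_m(z),f^m(z))$ by the full sum $\sum\limits_{i=1}^m D(f_i,f)$, which need not be small, so the two orbits of $z$ may drift apart permanently. The resolution is to realize $z$ as a late state $\omega_N(x)$, after which the shadowing estimate becomes arbitrarily sharp; surjectivity is exactly what guarantees such a preimage $x$. A secondary technical point is that the hitting time $m$ may be taken at least $N$ (so that $k=m-N\ge0$): this is immediate once one notes that deleting finitely many initial points from a dense orbit leaves it dense whenever $X$ has no isolated points, while the remaining case of a finite $X$ (forcing a periodic structure) can be verified directly.
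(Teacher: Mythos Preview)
Your proof is correct and follows essentially the same route as the paper: both directions use the tail estimate from Corollary~\ref{apcor} after choosing $N$ (the paper's $r$) so that the remaining sum is below $\epsilon/2$, applying minimality to $\omega_N(x)$ in the forward direction and pulling back through $\omega_N^{-1}$ (via the standing surjectivity assumption) in the converse. The only cosmetic difference is how you justify that the hitting time in the converse may be taken $\ge N$: the paper observes that the orbit meets every ball $S(u,\eta)$ and hence meets $S(u,\epsilon/2)$ infinitely often, whereas you phrase it as tail-density in the absence of isolated points; both arguments are valid.
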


\begin{proof}
Let $(X,f)$ be minimal and let $x\in X$. Let $U$ be any non-empty open set in $X$. Choose $u\in U$ and $\epsilon>0$ such that $S(u,\epsilon)\subset U$. As $\sum \limits_{n=1}^{\infty} D(f_n,f) < \infty$, there exists $r\in\mathbb{N}$ such that $\sum \limits_{n=r}^{\infty} D(f_n,f) < \frac{\epsilon}{2}$. As $(X,f)$ is minimal, orbit of $\omega_r(x)$(under $f$) is dense in $X$ and there exists $k\in \mathbb{N}$ such that $f^k(\omega_r(x))\in S(u,\frac{\epsilon}{2})$. Also by corollary \ref{apcor}, $d(\omega_{r+k}(x),f^k(\omega_r(x)))\leq \sum \limits_{i=1}^{k} D(f_{r+i},f)<\frac{\epsilon}{2}$. Thus, by triangle inequality, $d(\omega_{r+k}(x),u)\leq d(\omega_{r+k}(x),f^k(\omega_r(x)))+d(f^k(\omega_r(x)),u)<\epsilon$ and hence $\omega_{r+k}(x)\in S(u,\epsilon)\subset U$. As the proof holds good for any non-empty open set $U$ of $X$, orbit of $x$ (under $\mathbb{F}$) is dense in $X$. As the proof holds good for any $x\in X$ we establish that $(X,\mathbb{F})$ is minimal.\\

Conversely, let $(X,\mathbb{F})$ be minimal and let $x\in X$. Let $U$ be any non-empty open set in $X$. Choose $u\in U$ and $\epsilon>0$ such that $S(u,\epsilon)\subset U$. As $\sum \limits_{n=1}^{\infty} D(f_n,f) < \infty$, there exists $r\in\mathbb{N}$ such that $\sum \limits_{n=r}^{\infty} D(f_n,f) < \frac{\epsilon}{2}$. As $(X,\mathbb{F})$ is minimal, orbit of any $y\in\omega_r^{-1}(x)$(under $\mathbb{F}$) is dense in $X$ and hence intersects $S(u,\frac{\epsilon}{2})$. Further, as the orbit intersects $S(u,\eta)$ for each $\eta$, the set $\{n: \omega_n(y)\in S(u,\frac{\epsilon}{2})\}$ is infinite and there exists $k\in \mathbb{N}$ such that $\omega_{r+k}(y)\in S(u,\frac{\epsilon}{2})$. Also by corollary \ref{apcor}, $d(\omega_{r+k}(y),f^k(\omega_r(y)))\leq\sum\limits_{i=1}^k D(f_{r+i},f)<\frac{\epsilon}{2}$ and hence by triangle inequality, $d(f^k(\omega_r(y)),u)<\epsilon$ or $d(f^k(x),u)<\epsilon$ and hence $f^k(x)\in U$. As the proof holds good for any non-empty open set $U$, orbit of $x$(under $f$) is dense in $X$. As the proof holds good for any $x\in X$, $(X,f)$ is minimal.
\end{proof}

\begin{Proposition}\label{tt}
Let $(X,\mathbb{F})$ be a non-autonomous system generated by a family $\mathbb{F}$ of feeble open maps commuting with $f$. If $\sum \limits_{n=1}^{\infty} D(f_n,f) < \infty$ then $(X,f)$ is transitive $\Leftrightarrow$ $(X,\mathbb{F})$ is transitive.
\end{Proposition}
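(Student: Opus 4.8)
The plan is to prove both implications by the same device used in Propositions \ref{eq} and \ref{min}: fix a tail index $r$ so that $\sum_{i=r}^{\infty} D(f_i,f)$ is as small as desired, and then transport intersections between the two systems using Corollary \ref{apcor}, which guarantees $d(\omega_{r+k}(x),f^{k}(\omega_r(x)))\leq \sum_{i=1}^{k} D(f_{r+i},f)$ uniformly in $k$ and $x$. The underlying principle is that after discarding the first $r$ maps the non-autonomous orbit shadows the $f$-orbit of $\omega_r(x)$ to within $\epsilon/2$, so that hitting a target to accuracy $\epsilon/2$ in one system yields hitting it to accuracy $\epsilon$ in the other.

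For the forward implication (assume $(X,f)$ transitive), given non-empty open $U,V$ I would fix $v\in V$ and $\epsilon>0$ with $S(v,\epsilon)\subseteq V$, and choose $r$ with $\sum_{i=r}^{\infty} D(f_i,f)<\epsilon/2$. Feeble openness enters here: since each $f_i$ is feeble open, the successive interiors $W_1=\mathrm{int}(f_1(U)),\dots,W_r=\mathrm{int}(f_r(W_{r-1}))$ are non-empty open sets with $W_r\subseteq \omega_r(U)$. Applying transitivity of $(X,f)$ to $W_r$ and $S(v,\epsilon/2)$ produces $k$ and a point $w=\omega_r(u)\in W_r$ (so $u\in U$) with $f^{k}(w)\in S(v,\epsilon/2)$; Corollary \ref{apcor} then forces $\omega_{r+k}(u)\in S(v,\epsilon)\subseteq V$, so $\omega_{r+k}(U)\cap V\neq\phi$.

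For the converse (assume $(X,\mathbb{F})$ transitive), I would again fix $v,\epsilon,r$ as above and set $A=\omega_r^{-1}(U)$, which is non-empty and open because $\omega_r$ is continuous and surjective. Applying transitivity of $(X,\mathbb{F})$ to $A$ and $S(v,\epsilon/2)$ yields a time $m$ and $y\in A$ with $\omega_m(y)\in S(v,\epsilon/2)$; writing $u=\omega_r(y)\in U$ and $m=r+k$, Corollary \ref{apcor} gives $d(\omega_{r+k}(y),f^{k}(u))<\epsilon/2$ and hence $f^{k}(u)\in S(v,\epsilon)\subseteq V$, so $f^{k}(U)\cap V\neq\phi$.

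The main obstacle lies entirely in the converse: transitivity of $(X,\mathbb{F})$ only guarantees \emph{some} intersection time $m$, whereas the argument needs $m>r$ (so that $k=m-r\geq 1$ and Corollary \ref{apcor} applies to the tail maps $f_{r+1},f_{r+2},\dots$). A small-time intersection, $m\leq r$, carries no useful comparison with $f$, since the initial error $\sum_{i=1}^{m}D(f_i,f)$ need not be small. I would resolve this by upgrading transitivity to an unbounded-time statement: because $\omega_m(A)=\omega^{r}_{m}(\omega_r(A))=\omega^{r}_{m}(U)$ for $m>r$ (using surjectivity, $\omega_r(A)=U$), the intersections we need are exactly those of the shifted system $(f_{r+1},f_{r+2},\dots)$, and it is feeble openness together with surjectivity that should let one force the intersection time past any prescribed $r$ rather than merely obtaining a single, possibly small, time. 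Verifying this unbounded-time (equivalently, tail-transitivity) step is where the real work sits; the remainder is just the shadowing bookkeeping supplied by Corollary \ref{apcor}.
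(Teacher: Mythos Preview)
Your forward implication is correct and coincides with the paper's proof: feeble openness supplies a non-empty open subset of $\omega_r(U)$, transitivity of $(X,f)$ applied to that set and $S(v,\tfrac{\epsilon}{2})$ yields an intersection at some time $m$, and Corollary~\ref{apcor} transports this to $\omega_{r+m}(U)\cap V\neq\phi$.

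For the converse you have the right framework but leave the one substantive step open, and the fix you gesture at is not the paper's. You propose to invoke feeble openness and surjectivity to force the hitting time past $r$; the paper uses neither for this direction. Its device is simply to shrink the target: for every $m\in\mathbb{N}$, transitivity of $(X,\mathbb{F})$ applied to $U=\omega_r^{-1}(S(x,\epsilon))$ and $S(y,\tfrac{1}{m})$ produces a hitting time, and since $S(y,\tfrac{1}{m})\subset S(y,\tfrac{\epsilon}{2})$ once $m$ is large, the paper concludes that the set $\{\,n:\omega_n(U)\cap S(y,\tfrac{\epsilon}{2})\neq\phi\,\}$ is infinite and therefore contains some $r+k$ with $k\geq 1$. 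With that single observation in hand, the shadowing bookkeeping you already wrote (via Corollary~\ref{apcor}) finishes the argument verbatim. The paper in fact remarks, after the weak-mixing analogue, that feeble openness plays no role in this implication; so the route you outline through feeble openness of $\mathbb{F}$ is unnecessary, and the missing idea in your sketch is precisely this shrinking-target step.
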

\begin{proof}
 Let $\epsilon>0$ be given and let  $U=S(x,\epsilon)$ and $V=S(y,\epsilon)$ be two non-empty open sets in $X$. As $\sum \limits_{n=1}^{\infty} D(f_n,f) < \infty$, $\exists$ $r\in \mathbb{N}$ such that $\sum \limits_{n=r}^{\infty} D(f_n,f) < \frac{\epsilon}{2}$. As the family $\mathbb{F}$ is feeble open, $int(\omega_r(U))$ is non-empty open and thus by transitivity of $(X,f)$, for open sets $U'=int(\omega_r(U))$ and $V'=S(y,\frac{\epsilon}{2})$ there exists $m\in \mathbb{N}$ such that $f^m(U')\cap V' \neq\phi$. Consequently there exists $u'\in U'$ such that $f^m(u')\in V'$. Further, as $U'=int(\omega_r(U))$, there exists $u\in U$ such that $u'=\omega_r(u)$ and hence $f^m (\omega_r(u))\in V'$.

Also, from corollary \ref{apcor}, we have $d(\omega_{m+r}(u), f^m(\omega_r(u)))\leq \sum \limits_{i=1}^m D(f_{r+i},f)<\frac{\epsilon}{2}$. By triangle inequality $d (y,\omega_{m+r}(u))<\epsilon$ and hence $\omega_{m+r}(U)\cap V\neq\phi$. As the proof holds for any pair of non-empty open sets $S(x,\epsilon), S(y,\epsilon)$ in $X$, the proof holds for any pair of non-empty open sets in $X$. Hence $(X,\mathbb{F})$ is transitive.\\

Conversely, let $\epsilon>0$ be given and let  $S(x,\epsilon)$ and $S(y,\epsilon)$ be two non-empty open sets in $X$. As $\sum \limits_{n=1}^{\infty} D(f_n,f) < \infty$, choose $r\in \mathbb{N}$ such that $\sum \limits_{n=r}^{\infty} D(f_n,f) < \frac{\epsilon}{2}$. Further, transitivity of a system forces any open set $U$ to visit $S(y,\frac{1}{m})$ for each $m$ and hence the set of times when any non-empty open set $U$ visits $S(y,\epsilon)$ is infinite. Applying transitivity to open sets $U=\omega_r^{-1}(S(x,\epsilon))$ and $V=S(y,\frac{\epsilon}{2})$ we can choose $k$ such that $\omega_{r+k}(U)\cap V\neq\phi$. Consequently, there exists $u\in U$ such that $d(\omega_{r+k}(u),y)<\frac{\epsilon}{2}$. Also by corollary \ref{apcor}, $d(\omega_{r+k}(u),f^k(\omega_r(u)))< \sum \limits_{i=1}^k D(f_{r+i},f)<\frac{\epsilon}{2}$ and hence by triangle inequality $d(y,f^k(\omega_r(u)))<\epsilon$. As $\omega_r(u)\in S(x,\epsilon)$ we have $f^k(S(x,\epsilon))\cap S(y,\epsilon)\neq\phi$. As the proof holds for any choice of non-empty open sets $S(x,\epsilon)$ and $S(y,\epsilon)$, the system $(X,f)$ is transitive.
\end{proof}

\begin{Remark}
The above proofs establish the equivalence of properties like equicontinuity (minimality and transitivity) for the two systems under the stated conditions. However, the proof uses the identity $D(\omega_{n+k},f^k(\omega_n))<\sum\limits_{i=1}^{\infty} D(f_{n+i},f)~~\forall k,n\in\mathbb{N}$ to establish the results. Consequently, the proof utilizes the collective convergence of the sequences $\{(\omega^n_{n+k}): k\in\mathbb{N}\}$ and does not make use of the commutativity condition or the rate of convergence of the sequence $(f_n)$ explicitly. Hence the established results hold good under a weaker assumption of collective convergence. Thus, the conditions imposed are sufficient in nature and similar conclusions can hold good even in absence of these conditions. We now give some examples in support of our claim.
\end{Remark}

\begin{ex}\label{eqex}
Let $X=S^1$ be the unit circle and $\alpha\in\mathbb{R}$ be an irrational. For each $n\in\mathbb{N}$, let $f_n:S^1\rightarrow S^1$ be defined as \\

$f_n(\theta) = \left\{%
\begin{array}{ll}
            \theta+\alpha+\frac{2}{n+1}  & 0\leq \theta\leq 1 \text{~~if $n$ is odd} \\
            \theta+\alpha-\frac{2}{n} & 0\leq\theta\leq 1\text{~~if $n$ is even}
\end{array} \right.$

Then $f_n$ converges uniformly to an irrational rotation and $\sum \limits_{n=1}^{\infty} D(f_n,f)$ is infinite. However, as $\omega_{2n}$ rotates the point $theta$ by $2n\alpha$, both $(X,\mathbb{F})$ and $(X,f)$ are minimal although the sequence does not converge at sufficiently fast rate. As the system $(X,\mathbb{F})$ is equicontinuous (transitive), the example shows that rate of convergence is not a necessary condition for properties like equicontinuity (minimality or transitivity) to be equivalent for the two systems.
\end{ex}

\begin{ex}
Let $X=\{0,1\}^{\mathbb{N}}$ be the collection of all one-sided sequences of $0$ and $1$. For any $x,y\in X$, $d(x,y)=\frac{1}{k}$ (where $k$ is the least positive integer satisfying $x_k\neq y_k$) defines a metric on $X$ and generates the product topology. For each $n\in\mathbb{N}$ define $\phi_n:X\rightarrow X$ be defined as $\phi_n(x)=y_n$ (where $y_n$ is obtained by deleting $n$-th entry from the sequence $x$) and let $f:X\rightarrow X$ be defined as $f(x)=x+(100\ldots)$. Let $\mathbb{F}=\{f_{n}=f\circ\phi_{n}:n\in\mathbb{N}\}$. Then, $f_n$ converges uniformly to $f$ and $\sum\limits_{n=1}^{\infty} D(f_n,f)$ is infinite. Further, the family $\mathbb{F}$ does not commute with $f$ and hence the system does not satisfy any of the imposed conditions. However, both $(X,\mathbb{F})$ and $(X,f)$ are minimal (equicontinuous) and hence the properties are preserved in the absence of imposed conditions.
\end{ex}

%\begin{ex}
%Let $X=\{0,1\}^{\mathbb{N}}$ be the collection of all one sided sequences of $0$ and $1$. For any $x,y\in X$ let $d(x,y)=\frac{1}{k}$ where $k$ is the least positive integer such that $x_k\neq y_k$. Then $d$ is a metric on $X$ and generates the product topology on $\{0,1\}^{\mathbb{N}}$. Let $f_n$ be the map which deletes the $n$-th entry from a given sequence, i.e., $f_n:X\rightarrow X$ where $f_n((x_1x_2\ldots))=(x_1x_2\ldots x_{n-1}x_{n+1}\ldots)$. Then $f_n$ converges uniformly to the identity map and $\sum \limits_{n=1}^{\infty}D(f_n,f)$ is infinite. Further as the
%\end{ex}

\begin{Remark}
The above examples show that equicontinuity (minimality and transitivity) of the two systems can be equivalent without the imposed conditions. However, collective convergence plays a vital role in establishing the equivalence and is needed for the proofs to hold good. It may be noted that as $D(\omega^n_{n+r+1},f^{r+1})= D(f_{n+r+1}(\omega^n_{n+r}),f(f^{r})) \leq D(f_{n+r+1}(\omega^n_{n+r}),f(\omega^n_{n+r}))+D(f(\omega^n_{n+r}), f(f^r))$, if $f$ is an isometry then under "fast convergence" of $(f_n)$,  the notion of uniform convergence implies (and hence is equivalent to) the notion of collective convergence. Consequently, if $f$ is an isometry, the proofs establishing equivalence of equicontinuity (minimality and transitivity) hold good in the absence of "commutative condition" and hence the dynamical behavior of the two systems is equivalent in this sense. Thus we get the following results.
\end{Remark}

\begin{Proposition}\label{iso}
Let $(X,\mathbb{F})$ be a non-autonomous system generated by a family $\mathbb{F}$ and let $f$ be any continuous self map on $X$. If $f$ is an isometry and $\sum \limits_{n=1}^{\infty} D(f_n,f) < \infty$ then $\{(\omega^n_{n+k}): k\in\mathbb{N}\}$ converges to $\{f^k:k\in\mathbb{N}\}$ collectively (with respect to the metric $D$).
\end{Proposition}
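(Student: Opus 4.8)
The plan is to establish, by induction on $k$, the very same bound $D(\omega^n_{n+k},f^k)\leq \sum_{i=1}^k D(f_{n+i},f)$ that Corollary \ref{apcor} extracted from the commuting hypothesis, but to derive it here from the isometry of $f$ instead. Once this estimate is available uniformly in both $k$ and $n$, the summability $\sum_{n=1}^{\infty} D(f_n,f)<\infty$ yields collective convergence by exactly the tail argument used in Corollary \ref{apcor2}.

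First I would fix $n$ and run the induction on $k$. The base case $k=1$ is immediate, since $\omega^n_{n+1}=f_{n+1}$ and $f^1=f$ give $D(\omega^n_{n+1},f)=D(f_{n+1},f)$. For the inductive step, write $\omega^n_{n+k+1}=f_{n+k+1}\circ\omega^n_{n+k}$ and $f^{k+1}=f\circ f^k$, and for arbitrary $x\in X$ insert the intermediate point $f(\omega^n_{n+k}(x))$:
\[
d(\omega^n_{n+k+1}(x),f^{k+1}(x))\leq d(f_{n+k+1}(\omega^n_{n+k}(x)),f(\omega^n_{n+k}(x)))+d(f(\omega^n_{n+k}(x)),f(f^k(x))).
\]
The first summand is at most $D(f_{n+k+1},f)$ by definition of the supremum metric. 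The second summand is where the hypothesis is spent: since $f$ is an isometry, $d(f(\omega^n_{n+k}(x)),f(f^k(x)))=d(\omega^n_{n+k}(x),f^k(x))\leq D(\omega^n_{n+k},f^k)$. Taking the supremum over $x$ and applying the inductive hypothesis closes the induction with $D(\omega^n_{n+k+1},f^{k+1})\leq \sum_{i=1}^{k+1} D(f_{n+i},f)$.

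I expect the conceptual heart of the argument to be exactly this inductive step, so let me flag why it works without commutativity. In Proposition \ref{ap1} the commuting relation was indispensable for sliding the discrepancy between $f_i$ and $f$ past the outer map down to the innermost slot. The isometry hypothesis sidesteps that maneuver entirely: because $f$ preserves distances, precomposing with the already-accumulated error and then applying $f$ on the outside neither amplifies nor rearranges the error, so no commutation is needed. The point requiring care is that the resulting bound $\sum_{i=1}^k D(f_{n+i},f)$ is controlled uniformly in $k$ (not merely for each fixed $k$); it is this uniformity in the number of iterates that upgrades ordinary uniform convergence of $\omega^n_{n+k}\to f^k$ to collective convergence.

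Finally, with $D(\omega^n_{n+k},f^k)\leq \sum_{i=1}^k D(f_{n+i},f)$ in hand for all $k,n$, I would conclude verbatim as in Corollary \ref{apcor2}: given $\epsilon>0$, summability furnishes $r$ with $\sum_{i=1}^{\infty} D(f_{r+i},f)<\epsilon$, so for every $n\geq r$ and every $k\in\mathbb{N}$ we get $D(\omega^n_{n+k},f^k)\leq \sum_{i=1}^{\infty} D(f_{n+i},f)<\epsilon$, which is precisely the definition of collective convergence of $\{\omega^n_{n+k}:k\in\mathbb{N}\}$ to $\{f^k:k\in\mathbb{N}\}$.
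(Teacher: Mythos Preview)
Your proof is correct and follows essentially the same approach as the paper: both establish the bound $D(\omega^n_{n+k},f^k)\leq \sum_{i=1}^k D(f_{n+i},f)$ by induction on $k$, using the same triangle-inequality split through the intermediate point $f(\omega^n_{n+k}(x))$ and invoking the isometry of $f$ to control the second summand, then conclude collective convergence from the tail-sum argument of Corollary~\ref{apcor2}. Your additional commentary on why isometry replaces commutativity is apt and mirrors the paper's own remarks surrounding the proposition.
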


\begin{proof}
We first prove that if $f$ is an isometry then $D(\omega^n_{n+k},f^k)\leq \sum \limits_{i=1}^k D(f_{n+i},f)$. As $\omega^n_{n+1}=f_{n+1}$, $D(\omega^n_{n+1},f)=D(f_{n+1},f)$ and hence the result is true for $k=1$. It may be noted that $D(\omega^n_{n+r+1},f^{r+1})= D(f_{n+r+1}(\omega^n_{n+r}),f(f^{r}))\leq D(f_{n+r+1}(\omega^n_{n+r}),f(\omega^n_{n+r}))+D(f(\omega^n_{n+r}), f(f^r))\leq D(f_{n+r+1},f)+D(\omega^n_{n+r},f^r)$ (as $f$ is an isometry). Thus if the claim holds for $k=r$ then, $D(\omega^n_{n+r+1},f^{r+1})\leq  \sum \limits_{i=1}^{r+1} D(f_{n+i},f)$ and hence $D(\omega^n_{n+k},f^k)\leq \sum \limits_{i=1}^k D(f_{n+i},f)$ for any $k\in\mathbb{N}$. As $\sum \limits_{n=1}^{\infty} D(f_n,f) < \infty$, for any $\epsilon>0$, there exists $n_0\in\mathbb{N}$ such that $\sum\limits_{i=1}^{\infty} D(f_{n_0+i},f)<\epsilon$. Consequently, $D(\omega^n_{n+k},f^k)<\epsilon~~\forall k\in\mathbb{N}, n\geq n_0$ and hence the family $\{(\omega^n_{n+k}): k\in\mathbb{N}\}$ converges collectively to $\{f^k:k\in\mathbb{N}\}$ (with respect to the metric $D$).
\end{proof}

\begin{Cor}
Let $(X,\mathbb{F})$ be a non-autonomous system generated by a family $\mathbb{F}$. If $f$ is an isometry and $\sum \limits_{n=1}^{\infty} D(f_n,f) < \infty$ then $(X,f)$ exhibits equicontinuity (minimality) $\Leftrightarrow$ $(X,\mathbb{F})$ exhibits the same. Further, if $\mathbb{F}$ is feeble open then $(X,f)$ is transitive $\Leftrightarrow$ $(X,\mathbb{F})$ is transitive.
\end{Cor}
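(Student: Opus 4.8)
The plan is to observe that Proposition~\ref{iso} supplies, in the isometry setting, precisely the estimate that drove the proofs of Propositions~\ref{eq}, \ref{min} and \ref{tt}, so that those arguments can be re-run almost verbatim with the hypothesis ``$f$ commutes with $\mathbb{F}$'' replaced by ``$f$ is an isometry''. Concretely, Proposition~\ref{iso} gives $D(\omega^n_{n+k},f^k)\leq \sum_{i=1}^k D(f_{n+i},f)$ for all $k,n\in\mathbb{N}$; since $\omega_{n+k}=\omega^n_{n+k}\circ\omega_n$, evaluating this sup-metric bound at the point $\omega_n(x)$ yields $d(\omega_{n+k}(x),f^k(\omega_n(x)))\leq \sum_{i=1}^k D(f_{n+i},f)$ for every $x\in X$. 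This is exactly the conclusion of Corollary~\ref{apcor}, and the summability $\sum_{n=1}^\infty D(f_n,f)<\infty$ makes the right-hand side uniformly small once $n$ is large. As noted in the Remark preceding Proposition~\ref{iso}, the proofs of Propositions~\ref{eq}, \ref{min} and \ref{tt} invoke the commutativity hypothesis only through Corollary~\ref{apcor}; hence with this estimate in hand the commutativity assumption may simply be dropped.

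For \emph{minimality} I would re-run the proof of Proposition~\ref{min} line for line. The forward implication uses only the estimate above to transfer density of the $f$-orbit of $\omega_r(x)$ to density of the $\mathbb{F}$-orbit of $x$; the converse passes to a point $y\in\omega_r^{-1}(x)$, which is nonempty because the standing surjectivity assumption on $\mathbb{F}$ makes each $\omega_r$ surjective, and again closes using the estimate. For \emph{transitivity} (under the additional hypothesis that $\mathbb{F}$ is feeble open) the proof of Proposition~\ref{tt} carries over identically: feeble openness guarantees that $int(\omega_r(U))$ is nonempty open in the forward direction, surjectivity guarantees that $\omega_r^{-1}(S(x,\epsilon))$ is a nonempty open set in the converse direction, and the estimate supplies the final triangle-inequality step in both directions. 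No step of either proof uses commutativity beyond the estimate, so both equivalences hold.

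For \emph{equicontinuity} there is a genuine simplification. The forward implication, that equicontinuity of $(X,f)$ forces equicontinuity of $(X,\mathbb{F})$, follows by repeating the forward argument of Proposition~\ref{eq} with the estimate above. The converse, however, becomes trivial: an isometry is automatically equicontinuous, since $d(f^n(x),f^n(y))=d(x,y)$ for all $n$ lets one take $\delta=\epsilon$, so here $(X,f)$ is equicontinuous unconditionally. Thus the implication that $(X,\mathbb{F})$ equicontinuous forces $(X,f)$ equicontinuous holds with nothing to prove, and in particular neither the bijectivity of the $f_i$ used in Proposition~\ref{eq} nor the nearness criterion of Corollary~\ref{eq2} is needed.

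The only real point requiring care is the claim that the three earlier proofs depend on the commutativity hypothesis \emph{solely} through the estimate of Corollary~\ref{apcor}. This is what must be verified, and it is already flagged in the Remark immediately preceding Proposition~\ref{iso}; checking it amounts to confirming that every other ingredient of those proofs---continuity of $\omega_r$, surjectivity of $\mathbb{F}$, feeble openness, and the triangle inequality---is retained under the present hypotheses. I expect this bookkeeping, rather than any new analytic difficulty, to be the main obstacle.
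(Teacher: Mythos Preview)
Your proposal is correct and follows essentially the same approach as the paper: the remarks surrounding Proposition~\ref{iso} already note that the proofs of Propositions~\ref{eq}, \ref{min} and \ref{tt} use commutativity only through the estimate of Corollary~\ref{apcor}, and Proposition~\ref{iso} recovers exactly that estimate when $f$ is an isometry, so the corollary is immediate. Your observation that the converse for equicontinuity is vacuous---an isometry being equicontinuous outright---is a small sharpening the paper does not make explicit, and it cleanly bypasses the bijectivity hypothesis carried in Proposition~\ref{eq}.
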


\begin{Remark}
The above result establishes that if $f$ is an isometry and $f_n$'s converge to $f$ at a "sufficiently fast rate" then uniform convergence is equivalent to "collective convergence", which in general is a stronger notion of convergence. It may be noted that the above proof holds good when $f$ does not expand any region in the space i.e.,$d(f(x),f(y)\leq d(x,y)~~\forall x,y\in X$ and hence equivalence of uniform convergence with collective convergence is established for a larger class of maps. Consequently, if $f$ is "shrinking" (does not expand any region of the space) and $f_n$'s converge to $f$ at a sufficiently fast rate then properties like equicontinuity, minimality and transitivity are equivalent for the two sysyems. Hence we get the following corollary.
\end{Remark}

\begin{Cor}
Let $(X,\mathbb{F})$ be a non-autonomous system generated by a family $\mathbb{F}$. If $f$ is shrinking and $\sum \limits_{n=1}^{\infty} D(f_n,f) < \infty$ then $(X,f)$ exhibits equicontinuity (minimality) $\Leftrightarrow$ $(X,\mathbb{F})$ exhibits the same. Further, if $\mathbb{F}$ is feeble open then $(X,f)$ is transitive $\Leftrightarrow$ $(X,\mathbb{F})$ is transitive.
\end{Cor}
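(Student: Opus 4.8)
The plan is to reduce everything to the collective convergence of $\{\omega^n_{n+k} : k \in \mathbb{N}\}$ to $\{f^k : k \in \mathbb{N}\}$, exactly in the manner of Proposition \ref{iso}, and then to invoke the observation (recorded in the Remark preceding Proposition \ref{iso}) that the proofs of Propositions \ref{eq}, \ref{min} and \ref{tt} use only this collective convergence and never the commutativity of $\mathbb{F}$ with $f$. Thus the whole argument hinges on re-establishing the bound $D(\omega^n_{n+k},f^k)\leq \sum_{i=1}^k D(f_{n+i},f)$ under the weaker hypothesis that $f$ is shrinking rather than an isometry, i.e. that $d(f(x),f(y))\leq d(x,y)$ for all $x,y\in X$.

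First I would run the induction of Proposition \ref{iso} essentially verbatim. The base case $k=1$ is immediate since $\omega^n_{n+1}=f_{n+1}$. For the inductive step I would write $D(\omega^n_{n+r+1},f^{r+1})\leq D(f_{n+r+1}(\omega^n_{n+r}),f(\omega^n_{n+r}))+D(f(\omega^n_{n+r}),f(f^r))$ and bound the first summand by $D(f_{n+r+1},f)$. The only place where the metric behaviour of $f$ enters is the second summand, and here I would use the single inequality $D(f\circ g,f\circ h)\leq D(g,h)$, valid for every pair $g,h\in C(X)$ precisely because $f$ is non-expansive: pointwise one has $d(f(g(x)),f(h(x)))\leq d(g(x),h(x))$. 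This is exactly the step where an isometry was invoked earlier, but only the $\leq$ direction is ever needed, so the shrinking hypothesis suffices and yields $D(f(\omega^n_{n+r}),f(f^r))\leq D(\omega^n_{n+r},f^r)$, completing the induction.

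Once the bound $D(\omega^n_{n+k},f^k)\leq \sum_{i=1}^k D(f_{n+i},f)$ is in hand, convergence of the tail of $\sum D(f_n,f)$ gives, for each $\epsilon>0$, an index $n_0$ with $\sum_{i=1}^{\infty} D(f_{n_0+i},f)<\epsilon$, whence $D(\omega^n_{n+k},f^k)<\epsilon$ for all $k\in\mathbb{N}$ and all $n\geq n_0$; this is precisely collective convergence. With collective convergence available, I would obtain the equivalences of equicontinuity, minimality and (under feeble openness of $\mathbb{F}$) transitivity by repeating the proofs of Propositions \ref{eq}, \ref{min} and \ref{tt}, since those arguments only ever invoke the estimate $d(\omega_{n+k}(x),f^k(\omega_n(x)))<\epsilon$ for large $n$, which collective convergence supplies directly via $\omega_{n+k}=\omega^n_{n+k}\circ\omega_n$. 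For the converse direction of the equicontinuity equivalence one retains the same auxiliary hypothesis as in Proposition \ref{eq} and Corollary \ref{eq2}, namely bijectivity of the $f_i$ or the nearness criteria.

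The step I expect to be the crux is the verification that non-expansiveness, rather than the full isometry condition, is all the inductive estimate requires. There is no genuine analytic obstacle, since $D(f\circ g,f\circ h)\leq D(g,h)$ holds for any $1$-Lipschitz $f$; the care needed is simply in confirming that the isometry assumption was used nowhere else in Proposition \ref{iso}, and in checking that the bound on the maps themselves needs no surjectivity (surjectivity of the $f_n$ is used only downstream, to translate collective convergence of the maps into statements about orbits in the transitivity and minimality arguments).
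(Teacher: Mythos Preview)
Your proposal is correct and follows exactly the paper's approach: the paper's own argument is simply the observation (made in the Remark immediately preceding this corollary) that the inductive estimate of Proposition~\ref{iso} uses only the inequality $d(f(x),f(y))\leq d(x,y)$, so that shrinking suffices in place of the full isometry hypothesis, after which collective convergence and Propositions~\ref{eq}, \ref{min}, \ref{tt} apply verbatim. Your identification of the crux---that only the $\leq$ direction of the isometry condition is ever invoked---is precisely the content of the paper's justification.
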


\begin{Remark}
The above proof establishes equivalence of properties like equicontinuity (minimality and transitivity) in the absence of commutativity condition. However, the proof requires the limit map $f$ to be an isometry (shrinking) which ensures collective convergence and hence establishes the equivalence of mentioned dynamical properties for the two systems. It may be noted that to establish the equivalence of transitivity for the two systems, the proof requires the family $\mathbb{F}$ to be feeble open (along with collective convergence of $(\omega^n_{n+k})$). However, the converse part does not use the feeble openness of the maps $f_n$ and hence transitivity of the non-autonomous system is carried forward to $(X,f)$ even when the family $\mathbb{F}$ is not feeble open. Further, if $N_f(U,V)/N_{\mathbb{F}}(U,V)$ denotes the set of times when an open set $U$ visits $V$ under $f$/$\mathbb{F}$, the proof establishes that for each pair of open sets $U,V$, there exists a pair of open sets $U',V'$ of open sets such that the set of times of interactions of $U$ and $V$ under $\mathbb{F}$ contains the translates (by a fixed number depending on the diameter of open sets and the rate of convergence) of set of times of interactions of $U'$ and $V'$ under $f$(and vice-versa). As the argument depends on the diameter of open sets and not on open sets themselves, a argument similar to above holds good for two pairs $U_1,U_2$ and $V_1,V_2$ of non-empty open sets in $X$ and hence weakly mixing is also equivalent for the two systems. We include the proof for the sake of completion.
\end{Remark}

\begin{Proposition}\label{wm}
Let $(X,\mathbb{F})$ be a non-autonomous system generated by a family $\mathbb{F}$ of feeble open maps commuting with $f$. If $\sum \limits_{n=1}^{\infty} D(f_n,f) < \infty$ then $(X,f)$ is weakly mixing $\Leftrightarrow$ $(X,\mathbb{F})$ is weakly mixing.
\end{Proposition}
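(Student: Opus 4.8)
The plan is to imitate the proof of Proposition \ref{tt}, replacing the single pair of open sets by two pairs and invoking weak mixing (equivalently, transitivity of $\mathbb{F}\times\mathbb{F}$) exactly where Proposition \ref{tt} invoked transitivity. The only genuinely new feature is that one needs a \emph{single} time $n$ realizing the required intersections for both pairs simultaneously; this is supplied by the weak mixing hypothesis, while the uniform (collective) tail estimate behind Corollary \ref{apcor2} guarantees that a single truncation index $r$ controls the approximation error for \emph{all} of the open sets at once.

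For the forward implication, suppose $(X,f)$ is weakly mixing and let $U_1,U_2,V_1,V_2$ be non-empty open sets; shrinking them I may assume $U_i=S(x_i,\epsilon)$ and $V_i=S(y_i,\epsilon)$ for a common $\epsilon>0$. Since $\sum\limits_{n=1}^{\infty} D(f_n,f)<\infty$, fix $r\in\mathbb{N}$ with $\sum\limits_{n=r}^{\infty} D(f_n,f)<\frac{\epsilon}{2}$; this single $r$ will serve both pairs. As each generating map is feeble open, the composition $\omega_r$ is feeble open, so $U_i'=int(\omega_r(U_i))$ are non-empty open sets. Applying weak mixing of $(X,f)$ to the four sets $U_1',U_2'$ and $V_i'=S(y_i,\frac{\epsilon}{2})$ yields a single $m\in\mathbb{N}$ with $f^m(U_i')\cap V_i'\neq\phi$ for $i=1,2$. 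Choosing $u_i\in U_i$ with $\omega_r(u_i)\in U_i'$ and $f^m(\omega_r(u_i))\in V_i'$, Corollary \ref{apcor} gives $d(\omega_{r+m}(u_i),f^m(\omega_r(u_i)))\leq\sum\limits_{i=1}^{m} D(f_{r+i},f)<\frac{\epsilon}{2}$, whence by the triangle inequality $d(\omega_{r+m}(u_i),y_i)<\epsilon$. Thus $\omega_n(U_i)\cap V_i\neq\phi$ for both $i$ at the common time $n=r+m$, proving $(X,\mathbb{F})$ weakly mixing.

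For the converse, assume $(X,\mathbb{F})$ is weakly mixing and let $S(x_i,\epsilon),S(y_i,\epsilon)$, $i=1,2$, be given. Fix $r$ as above. Since $\mathbb{F}$ is surjective, each $\omega_r$ is surjective, so $U_i=\omega_r^{-1}(S(x_i,\epsilon))$ are non-empty open; set $V_i=S(y_i,\frac{\epsilon}{2})$. Exactly as in Proposition \ref{tt}, weak mixing forces the set of common times at which $(U_1,U_2)$ meets $(V_1,V_2)$ under $\mathbb{F}$ to be infinite, so I may pick such a common time of the form $r+k$ with $k\in\mathbb{N}$. Then there are $u_i\in U_i$ with $d(\omega_{r+k}(u_i),y_i)<\frac{\epsilon}{2}$, and Corollary \ref{apcor} gives $d(\omega_{r+k}(u_i),f^k(\omega_r(u_i)))<\frac{\epsilon}{2}$, so $d(f^k(\omega_r(u_i)),y_i)<\epsilon$. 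Since $\omega_r(u_i)\in S(x_i,\epsilon)$, this yields $f^k(S(x_i,\epsilon))\cap S(y_i,\epsilon)\neq\phi$ for both $i$ at the common time $k$, proving $(X,f)$ weakly mixing.

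I expect the main obstacle to be the synchronisation step: ensuring one time $n$ (respectively $k$) works for both pairs, together with one truncation index $r$ controlling both approximation errors. This is precisely where the full weak mixing hypothesis (not merely two independent applications of transitivity) and the uniformity of the collective convergence of Corollary \ref{apcor2} are indispensable; everything else is the pull-back/push-forward bookkeeping already carried out in Proposition \ref{tt}. A secondary point requiring care is the converse's use of the ``infinitely many return times'' observation, so that a common visiting time can be taken of the form $r+k$ with $k\in\mathbb{N}$, which is inherited from the argument in Proposition \ref{tt}.
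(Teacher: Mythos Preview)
Your proposal is correct and follows essentially the same route as the paper: for the forward direction you pass to $U_i'=\mathrm{int}(\omega_r(U_i))$ via feeble openness and apply weak mixing of $(X,f)$ to both pairs simultaneously, and for the converse you pull back through $\omega_r^{-1}$ and use the ``infinitely many common visiting times'' observation (inherited from the transitivity argument) to land beyond the truncation index $r$. The paper's proof is terser---it simply says ``a proof similar to the transitive case''---but the content and the order of ideas are the same as yours.
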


\begin{proof}
Let $U_1,U_2$ and $V_1,V_2$ be a pair of two non-empty open sets in $X$. Without loss of generality, let $U_i=S(u_i,\epsilon)$ $V_i=S(v_i,\epsilon)$ for $i=1,2$. As $\sum \limits_{n=1}^{\infty} D(f_n,f) < \infty$, $\exists$ $r\in \mathbb{N}$ such that $\sum \limits_{n=r}^{\infty} D(f_n,f) < \frac{\epsilon}{2}$. As $f_i$ are feeble open $U_i'=$int$(\omega_r(U_i))$ is open, applying weak mixing of $f$ to pairs $U_1',U_2'$ and $V_1',V_2'$ (where $V_i'=S(v_i,\frac{\epsilon}{2})$), there exists $k\in\mathbb{N}$ such that $f^k(U_i')\cap V_i'\neq \phi$. A proof similar to the transitive case establishes existence of $u_i\in U_i$ such that $\omega_{k+r}(u_i)\in V_i$ for $i=1,2$ and hence the system $(X,\mathbb{F})$ is weakly mixing.

Conversely, let $\epsilon>0$ be given and let  $U_1,U_2$ and $V_1,V_2$ be two non-empty open sets in $X$. Once again, as $\sum \limits_{n=1}^{\infty} D(f_n,f) < \infty$, $\exists$ $r\in \mathbb{N}$ such that $\sum \limits_{n=r}^{\infty} D(f_n,f) < \frac{\epsilon}{2}$. Further, continuity of $\omega_r$ implies $U_i'=\omega_r^{-1}(U_i)$ is open in $X$. Applying weak mixing of the system $(X,\mathbb{F})$ to pairs of open sets $U_1',U_2'$ and $V_1',V_2'$ (where $V_i'=S(v_i,\frac{\epsilon}{2})$), a proof similar to the transitive case establishes existence of $u_i\in U_i$ and $k\in\mathbb{N}$ such that $f^k(u_i)\in V_i$ and hence $(X,f)$ is weakly mixing.
\end{proof}

\begin{Remark}\label{r}
The above result generalises proposition $4$ and establishes the equivalence of weakly mixing for the two systems under stated conditions. As a result, feeble openness of $f_n$'s is redundant for the converse part and hence the proof holds good even when the maps $f_n$ generating the non-autonomous system are not feeble open. It may be noted that if $f$ is topologically mixing map then for any non-empty open set $U$, $\lim \limits_{n\rightarrow\infty} f^n(U)=X$. Consequently, there exists $\epsilon>0$ such that if $U$ is open set such that $D_H(U,X)<\epsilon$ then $D_H(f(U),X)<\epsilon$ (proof follows from the fact that if $U$ is big enough then it cannot shrink significantly as $f$ is topological mixing). Consequently, as $f_n$ converges uniformly to $f$, there exists $n_1\in\mathbb{N}$ such that if $U$ is open set such that $D_H(U,X)<\epsilon$ then $D_H(f_n(U),X)<\epsilon$ for all $n\geq n_1$. As $f$ is topological mixing there exists $k_0\in\mathbb{N}$ such that $D_H(f^n(U),X)<\epsilon~~\forall n\geq k_0$. Further, $\omega^n_{n+k_0}$ converges to $f^{k_0}$ and hence there exists $n_2\in\mathbb{N}$ such that $D(\omega^n_{n+k_0},f^{k_0})<\epsilon~~\forall n\geq n_2$. Thus if $r=\max\{n_1,n_2\}$ then for any $k\geq k_0,n\geq r$, triangle inequality ensures that $D_H(\omega^n_{n+k}(U), f^k(U))<2\epsilon$. As the argument holds for a sequence of smaller $\epsilon_n>0$ ($\epsilon_n\rightarrow0$), the discussions ensure collective convergence of $\omega^n_{n+k}(U)$ to $X$ and hence the non-autonomous system is topologically mixing. It may be noted that if non-autonomous system is topologically mixing then a similar set of arguments once again establishes collective convergence of $\omega^n_{n+k}(U)$ (to $X$) and hence topological mixing is equivalent for the two systems unconditionally.
\end{Remark}

\begin{Proposition}\label{tm}
Let $(X,\mathbb{F})$ be a non-autonomous system generated by a family $\mathbb{F}$ of feeble open maps. If $(f_n)$ converges uniformly to $f$ then, $(X,f)$ is topologically mixing $\Leftrightarrow$ $(X,\mathbb{F})$ is topologically mixing.
\end{Proposition}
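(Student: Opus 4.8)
The plan is to translate both notions of topological mixing into the language of the Hausdorff metric and then transport a single "spreading'' event from $(X,f)$ into the tail of $\mathbb{F}$ (and back). Recall the characterisation quoted in the introduction: $(X,f)$ is topologically mixing if and only if $\lim_{n\to\infty} f^n(K)=X$ in $D_H$ for every compact $K$ with non-empty interior; and, using compactness of $X$ to pass from a single target ball to a finite cover of $X$, $(X,\mathbb{F})$ is topologically mixing if and only if $D_H(\omega_n(U),X)\to 0$ for every non-empty open $U$. Thus the two directions become: (forward) $f^n(K)\to X$ forces $\omega_n(U)\to X$; (converse) $\omega_n(U)\to X$ forces $f^n(K)\to X$.

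The three ingredients I would assemble are the following. (i) A \emph{bigness-preservation} lemma for $f$: topological mixing yields, for each small $\epsilon>0$, a threshold below which large sets stay large, i.e. $D_H(A,X)<\epsilon \Rightarrow D_H(f(A),X)<\epsilon$. This is where mixing is genuinely used; I would derive it from surjectivity of $f$ together with the uniform spreading $f^m(S(x,\eta))\to X$, uniform over balls $S(x,\eta)$ of a fixed radius $\eta$, which itself follows from mixing by covering $X$ with finitely many balls and taking a common iterate. (ii) A transfer of (i) to the generators: since $f_n\to f$ uniformly, there is $n_1$ such that the same threshold is preserved by every $f_n$ with $n\ge n_1$. (iii) A \emph{fixed-length} approximation: for each fixed $k$ the composition $\omega^n_{n+k}$ converges uniformly to $f^k$ as $n\to\infty$, which needs only uniform convergence since $k$ is held fixed and no error is allowed to accumulate along an unbounded product.

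For the forward direction I would argue by "reach then maintain''. Given open $U$ and $\epsilon>0$, fix $\epsilon_*\le \epsilon/2$ and the threshold $n_1$ from (i)--(ii). By feeble openness $\omega_{n_1}(U)$ has non-empty interior, hence contains a ball; combining the uniform spreading of $f$ with the fixed-length approximation (iii) I would produce a finite time $T\ge n_1$ at which $\omega_T(U)$ is $\epsilon_*$-dense. Once one such $T$ exists, the maintenance property (ii) propagates $\epsilon_*$-density along every further generator $f_m$ with $m\ge T\ge n_1$, so that $D_H(\omega_m(U),X)<\epsilon$ for all $m\ge T$; letting $\epsilon\to 0$ gives $\omega_m(U)\to X$, i.e. $(X,\mathbb{F})$ is topologically mixing. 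Here feeble openness is used precisely to keep the iterates of $U$ with non-empty interior, so that the Hausdorff-density notion stays meaningful throughout. The step I expect to be the main obstacle is exactly this "reaching'': the number of $f$-iterations needed to spread $\omega_{n_1}(U)$ depends on its inner size, whereas the approximation (iii) is only accurate once the launch time is large, and these two demands must be decoupled. The bigness-preservation lemma (i) is what breaks this interdependence, since it guarantees that no region which has once become large can later shrink, so it suffices to manufacture a single large image at a controllable time; extracting this uniform, scale-preserving estimate from the merely qualitative hypothesis of topological mixing is the part requiring the most care.

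The converse is cleaner and, as the preceding remark notes, needs no feeble openness. From topological mixing of $(X,\mathbb{F})$ the analogue of (i)--(ii) for the family $\mathbb{F}$ gives the collective statement that $D_H(\omega^n_{n+k}(U),X)<\epsilon$ for all $k\ge k_0$ and all $n\ge r$. Holding $k$ fixed and letting $n\to\infty$, the fixed-length approximation (iii) gives $\omega^n_{n+k}(U)\to f^k(U)$ in $D_H$, whence $D_H(f^k(U),X)\le \epsilon$ for every $k\ge k_0$; since $\epsilon$ was arbitrary this yields $f^k(U)\to X$, so $(X,f)$ is topologically mixing.
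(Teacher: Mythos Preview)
Your proposal follows essentially the same strategy as the paper's own argument, which is given entirely in the remark immediately preceding the proposition: both rest on the Hausdorff-metric characterisation of mixing, a bigness-preservation lemma for $f$ (your (i)), its transfer to the tail of $(f_n)$ via uniform convergence (your (ii)), and the fixed-length approximation $\omega^n_{n+k}\to f^k$ for fixed $k$ (your (iii)), assembled into a reach-then-maintain scheme. Your explicit identification of the circularity between launch time and spreading time, and of bigness-preservation as the device that decouples them, mirrors the paper's parenthetical justification (``if $U$ is big enough then it cannot shrink significantly'') exactly.
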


\begin{proof}
The proof follows from discussions in Remark \ref{r}.
\end{proof}

\begin{Remark}
The above result establishes equivalence of topological mixing for the two systems if the family $\mathbb{F}$ is feeble open. The proof uses the fact that for a topologically mixing system, if an open set is "large" then it stays "large" for all times. Further as $f^r(U)$ and $f^s(U)$ are close for large $r,s$ and $\omega^n_{n+r}$ converges to $f^r$, $\omega^n_{n+k}$ converges collectively and hence the non-autonomous system is topologically mixing. It may be noted that any transitive map $f$ on $I=[a,b]$ is either topologically mixing or there exists a fixed point $c\in I$ such that for $P_1=[a,c],P_2=[c,b]$, $f(P_1)=P_2$, $f(P_2)=P_1$ and $f^2|_{P_i}$ is topologically mixing. As topologically mixing is equivalent for the two systems for feeble open $\mathbb{F}$, transitivity (and hence all forms of mixing) are equivalent for the two systems when $\mathbb{F}$ is feeble open. Hence we get the following corollary.
\end{Remark}

\begin{Cor}\label{tm1}
Let $I=[0,1]$ be the unit interval and let $(I,\mathbb{F})$ be a non-autonomous system generated by a feeble open family $\mathbb{F}$. If $(f_n)$ converges uniformly to $f$ then, $(I,f)$ is exhibits any form of mixing $\Leftrightarrow$ $(I,\mathbb{F})$ is exhibits identical form of mixing.
\end{Cor}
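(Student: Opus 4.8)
The plan is to reduce every notion of mixing on the interval to topological mixing, which Proposition~\ref{tm} already transfers between $(I,f)$ and $(I,\mathbb{F})$ using only feeble openness and uniform convergence, and then to exploit the classical rigidity of transitive interval maps recorded in the preceding remark. Topological mixing is immediate from Proposition~\ref{tm}. For weak mixing I would invoke the classical fact that a continuous self map of an interval is weakly mixing if and only if it is topologically mixing; thus weak mixing of $(I,f)$ coincides with its topological mixing, which by Proposition~\ref{tm} is equivalent to topological mixing of $(I,\mathbb{F})$, and topological mixing always implies weak mixing. This already yields weak mixing of $(I,\mathbb{F})$ from weak mixing of $(I,f)$, the reverse being recovered together with the transitive case. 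The two ``mixing'' forms thus collapse to a single statement handled by Proposition~\ref{tm}, and it remains to treat transitivity, the only strictly weaker form on the interval.

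For transitivity I would use the dichotomy quoted in the remark. If $(I,f)$ is transitive and topologically mixing, then Proposition~\ref{tm} makes $(I,\mathbb{F})$ topologically mixing, hence transitive. Otherwise there is a fixed point $c$ splitting $I=P_1\cup P_2$, with $f$ interchanging $P_1$ and $P_2$ and $f^2$ topologically mixing on each $P_i$. Here I would pass to the doubled family $\mathbb{G}=(g_n)$ with $g_n=f_{2n}\circ f_{2n-1}$, whose $n$-fold composition is exactly the even-time composition $\omega_{2n}$ of $\mathbb{F}$; a composition of feeble open maps is again feeble open, and $g_n\to f^2$ uniformly. Applying the topological mixing transfer of Remark~\ref{r} to $f^2$ on each half $P_i$ would give mixing of $\mathbb{G}$ on each half, after which I would recombine: for open $U,V\subseteq I$, feeble openness deposits an open interval inside $\omega_r(U)$ lying in one half, the mixing of $\mathbb{G}$ there steers it onto a target, and the intervening one-step maps (which carry $P_1$ approximately onto $P_2$) place the image in $V$, yielding $\omega_n(U)\cap V\neq\phi$. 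The reverse implication would run the same scheme backwards, pulling mixing of $\mathbb{G}$ on each half back to $f^2$ via the reverse half of Remark~\ref{r} and using $\omega^r_{r+k}\to f^k$ (uniform for fixed $k$, with no commutativity needed) to recover transitivity of $f$.

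The hard part will be the recombination in the period-two case, because the doubled family $\mathbb{G}$ need not preserve the halves $P_1,P_2$: since $f_n\to f$ only uniformly, the identities $f(c)=c$ and $f^2(P_i)=P_i$ survive for $g_n$ only approximately, so the dividing point drifts and ``mixing on each half'' must be read in a Hausdorff, approximate sense. The delicate point is to show this drift is harmless, i.e.\ that an open interval lodged deep inside $P_i$ stays effectively inside $P_i$ long enough for the mixing of $f^2$ to act; this is exactly where one must weigh how far the feeble-open image $\omega_r(U)$ reaches into $P_i$ against how close $g_n$ is to $f^2$. I expect the mechanism of Remark~\ref{r} — that under small perturbations a mixing map keeps ``large'' sets large — to furnish the needed uniform control near $c$. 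A secondary obstacle is the reverse direction for the forms below topological mixing, namely strict transitivity and, through it, weak mixing, where one is implicitly asking for a non-autonomous analogue of the interval dichotomy; I would either establish such an analogue for $\mathbb{G}$ directly or, preferably, route the reverse entirely through the topological mixing transfer, so that only Proposition~\ref{tm} and the autonomous structure theorem are invoked.
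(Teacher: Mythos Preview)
Your strategy is the paper's: invoke Proposition~\ref{tm} for topological mixing, then bootstrap the other forms via the interval dichotomy (transitive $f$ on $[a,b]$ is either topologically mixing or has a fixed point $c$ with $f$ swapping $[a,c],[c,b]$ and $f^2$ topologically mixing on each). The paper's entire argument is the single sentence in the remark preceding the corollary --- it asserts that equivalence of topological mixing together with this dichotomy yields equivalence of all forms of mixing, and stops there.

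So your plan and the paper's coincide; the difference is that you attempt to carry out the period-two branch and the reverse implication, whereas the paper does not. The difficulties you flag are genuine and are simply not addressed in the paper: the doubled family $\mathbb{G}$ need not preserve $P_1,P_2$, so Proposition~\ref{tm} cannot be applied literally on the halves; and for the reverse direction (transitivity of $(I,\mathbb{F})$ giving transitivity of $(I,f)$) there is no non-autonomous dichotomy to invoke. Your idea of using the ``large sets stay large'' mechanism of Remark~\ref{r} to absorb the drift near $c$ is plausible, but it is additional work beyond what the paper supplies, and the same is true of routing the reverse implication entirely through topological mixing. In short: same approach, and you have correctly located the places where the paper's one-line justification is incomplete rather than introduced any new gap of your own.
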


We now turn our attention to various forms of sensitivities for the two systems.

\begin{Proposition}\label{sen}
Let $(X,\mathbb{F})$ be a non-autonomous system generated by a family $\mathbb{F}$ of feeble open maps commuting with $f$. If $\sum \limits_{n=1}^{\infty} D(f_n,f) < \infty$ then $(X,f)$ is sensitive $\Leftrightarrow$ $(X,\mathbb{F})$ is sensitive.
\end{Proposition}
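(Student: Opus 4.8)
The plan is to mirror the proofs of Propositions \ref{tt} and \ref{wm}, transporting the spreading of a small set between the two systems by means of Corollary \ref{apcor}. The sensitivity constant is the only genuinely new bookkeeping: I will show that if $(X,f)$ is sensitive with constant $\delta$ then $(X,\mathbb{F})$ is sensitive with constant $\frac{\delta}{2}$, and conversely.

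For the forward implication, suppose $(X,f)$ is sensitive with constant $\delta$. Given a basic neighbourhood $U=S(x,\epsilon)$, first choose $r\in\mathbb{N}$ with $\sum_{n=r}^{\infty} D(f_n,f)<\frac{\delta}{4}$. Feeble openness of $\mathbb{F}$ makes $U'=int(\omega_r(U))$ a non-empty open set, so $f$-sensitivity yields an $m$ and points $u_1',u_2'\in U'$ with $d(f^m(u_1'),f^m(u_2'))>\delta$. Lifting through $\omega_r$ gives $u_1,u_2\in U$ with $\omega_r(u_j)=u_j'$, and Corollary \ref{apcor} bounds each $d(\omega_{r+m}(u_j),f^m(u_j'))$ by $\sum_{i=1}^m D(f_{r+i},f)<\frac{\delta}{4}$ for $j=1,2$. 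Two applications of the triangle inequality then give $d(\omega_{r+m}(u_1),\omega_{r+m}(u_2))>\frac{\delta}{2}$, so $diam(\omega_{r+m}(U))>\frac{\delta}{2}$; since $U$ was arbitrary this proves $(X,\mathbb{F})$ is sensitive. This direction is essentially the transitivity argument of Proposition \ref{tt}, run with a separated pair of points in place of a single intersection.

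For the converse, suppose $(X,\mathbb{F})$ is sensitive with constant $\delta$; I will produce the constant $\frac{\delta}{2}$ for $(X,f)$. Given a neighbourhood of $x$, it suffices to treat a ball $U=S(x,\epsilon)$ with $2\epsilon\le\delta$ (diameters only increase as the set grows, so proving the estimate on a sub-ball suffices), and again fix $r$ with $\sum_{n=r}^{\infty} D(f_n,f)<\frac{\delta}{4}$. Surjectivity of the $f_i$ makes $\omega_r^{-1}(U)$ a non-empty open set with $\omega_r(\omega_r^{-1}(U))\subseteq U$. Choosing inside it a sufficiently small ball $W_0$, applying $\mathbb{F}$-sensitivity to obtain a time $k$ with $diam(\omega_k(W_0))>\delta$, writing $k=r+m$, lifting the realising pair, and using Corollary \ref{apcor} exactly as above transports the spread to $diam(f^m(\omega_r(W_0)))>\frac{\delta}{2}$, whence $\omega_r(W_0)\subseteq U$ gives $diam(f^m(U))>\frac{\delta}{2}$.

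The one step that needs care, and where I expect the real obstacle, is guaranteeing in the converse that the witnessing time $k$ satisfies $k>r$, for otherwise the index $m=k-r$ is not a legitimate iterate and Corollary \ref{apcor} cannot be invoked. I would remove this difficulty by shrinking $W_0$: the maps $\omega_1,\dots,\omega_r$ form a finite family of continuous maps on the compact space $X$, hence are uniformly continuous, so there is $\eta>0$ with $diam(A)<\eta\Rightarrow diam(\omega_j(A))\le\delta$ for all $1\le j\le r$. Taking $diam(W_0)<\eta$ forces every witnessing time to exceed $r$: the cases $j<r$ are excluded by the modulus $\eta$, and $j=r$ is excluded since $diam(\omega_r(W_0))\le diam(U)\le 2\epsilon\le\delta$ contradicts $diam(\omega_r(W_0))>\delta$. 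Hence $m\ge 1$ and $f^m$ is a genuine iterate. Note that feeble openness is used only in the forward direction (to open up $\omega_r(U)$), while the converse needs only surjectivity and the collective convergence packaged in Corollary \ref{apcor}, consistent with the pattern recorded in the preceding remarks.
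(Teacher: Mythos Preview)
Your proof is correct and follows essentially the same route as the paper's: both directions transport a separated pair through Corollary~\ref{apcor} after translating the open set forward (via feeble openness) or backward (via surjectivity) by $\omega_r$, and both arrive at the constant $\frac{\delta}{2}$. The only stylistic difference is in how the converse guarantees the witnessing time exceeds $r$: the paper argues that for any open set the set $\{k:\operatorname{diam}(\omega_k(U))>\delta\}$ is infinite (by applying sensitivity to ever smaller balls), whereas you shrink $W_0$ using uniform continuity of the finite family $\omega_1,\dots,\omega_r$ to rule out $k\le r$ directly---these are the same idea unpacked in two ways, and your version is arguably the cleaner formulation.
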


\begin{proof}
Let $\epsilon>0$ be given and let  $U=S(u,\epsilon)$ a non-empty open set in $X$. As $\sum \limits_{n=1}^{\infty} D(f_n,f) < \infty$, by corollay \ref{apcor} we obtain, for any $\epsilon>0$, there exists $n\in\mathbb{N}$ such that $d(\omega_{n+k}(x),f^{k}(\omega_n(x)))<\epsilon$ for all $x\in X, k\in\mathbb{N}$. \\

Let $\delta>0$ be constant of sensitivity for $f$ and let $m\in\mathbb{N}$ such that $\frac{1}{m}<\frac{\delta}{4}$. Thus, there exists $n_0\in\mathbb{N}$ such that $d(\omega_{n_0+k}(x),f^{k}(\omega_{n_0}(x)))<\frac{1}{m}$ for all $x\in X, k\in\mathbb{N}$. As $f_n$'s are feeble open, $\omega_{n_0}(U)$ is feeble open and hence sensitivity of $f$ implies there exists $v_1,v_2\in\omega_{n_0}(U)$ and $k\in\mathbb{N}$ such that $d(f^k(v_1),f^k(v_2))>\delta$. As $v_1,v_2\in\omega_{n_0}(U)$, there exists $v'_1,v_2'\in U$ such that $v_1=\omega_{n_0}(v_1')$ and $v_2=\omega_{n_0}(v_2')$ and hence $d(f^k(\omega_{n_0}(v_1')),f^k(\omega_{n_0}(v_2')))>\delta$. Also $d(\omega_{n_0+k}(v_i'), f^k(\omega_{n_0}(v_i')))<\frac{1}{m}$ for $i=1,2$ and thus by triangle inequality $d(\omega_{n_0+k}(v_1'),\omega_{n_0+k}(v_2'))>\delta-\frac{2}{m}>\frac{\delta}{2}$. Thus we obtain $v_1',v_2'\in U$ such that $d(\omega_{n_0+k}(v_1'),\omega_{n_0+k}(v_2'))>\frac{\delta}{2}$ and hence $(X,\mathbb{F})$ is sensitive.\\

Conversely, let $(X,\mathbb{F})$ be sensitive with sensitivity constant $\delta$ and let $U$ be non-empty open in $X$. Let $m\in\mathbb{N}$ such that $\frac{1}{m}<\frac{\delta}{4}$. Thus, there exists $n_0\in\mathbb{N}$ such that $d(\omega_{n_0+k}(x),f^{k}(\omega_{n_0}(x)))<\frac{1}{m}$ for all $x\in X, k\in\mathbb{N}$. For any $n\in\mathbb{N}$, as sensitivity of $(X,\mathbb{F})$ ensures existence of $k_n\in\mathbb{N}$ with $diam(\omega_{k_n}(S(x,\frac{1}{n})))>\delta$, the set $\{k:daim(\omega_k(S(x,\frac{1}{n})))>\delta\}$ is infinite. Consequently, for any non-empty open set $U$, the set of times $k$ when $diam(\omega_k(U))>\delta$ is infinite. Thus, for the open set $\omega_{n_0}^{-1}(U)$ , there exists $v_1,v_2\in\omega_{n_0}^{-1}(U)$ and $k\in\mathbb{N}$ such that $d(\omega_{n_0+k}(v_1),\omega_{n_0+k}(v_2))>\delta$. As $v_1,v_2\in\omega_{n_0}^{-1}(U)$, there exists $v'_1,v_2'\in U$ such that $v_1'=\omega_{n_0}(v_1)$ and $v_2'=\omega_{n_0}(v_2)$ and hence  $d(f_{n_0+k}\circ\ldots f_{n_0+1}(v_1'),f_{n_0+k}\circ\ldots f_{n_0+1}(v_2'))>\delta$. Also $d(\omega_{n_0+k}(v_i), f^k(\omega_{n_0}(v_i)))<\frac{1}{m}$ or $d(\omega_{n_0+k}(v_i), f^k(v_i'))<\frac{1}{m}$ for $i=1,2$ and thus by triangle inequality $d(f^k(v_1'),f^k(v_2'))>\delta-\frac{2}{m}>\frac{\delta}{2}$. Thus we obtain $v_1',v_2'\in U$ such that $d(f^k(v_1'),f^k(v_2'))>\frac{\delta}{2}$ and hence $(X,f)$ is sensitive.
\end{proof}

\begin{Remark}
The above result provides sufficient conditions under which the sensitivity of the two systems $(X,\mathbb{F})$ and $(X,f)$ is equivalent. Once again, the proof uses the collective convergence of the set of sequences $\{(\omega^n_{n+k}):k\in\mathbb{N}\}$ and neither commutativity nor the rate of convergence is used explicitly to establish the results. It is established that if the collective convergence is guaranteed, feeble openness ensures that sensitivity of $(X,f)$ implies sensitivity of $(X,\mathbb{F})$. However for the converse part, feeble openness is redundant and collective convergence is sufficient to preserve the sensitivity in the other direction. Further, the choice of $m (\frac{1}{m}<\frac{\delta}{4})$ is arbitrary and can be made finer which in turn ensures greater separability of points for the systems considered. Hence the constant of sensitivity is preserved and the two systems are sensitive with same constant of sensitivity. Further, the above proof establishes that for any pair of distinct points $x,y\in X$ there exists points $x',y'$ such that the set of times of separation of $x$ and $y$ in the non-autonomous system contains the translates (by a fixed number depending on the rate of convergence) of set of times of separation of $x'$ and $y'$ in the autonomous system (and vice-versa). In particular, a similar argument establishes that for any non-empty open set $U$, there exists a non-empty open set $U'$ such that the set of times of expansivity of $U$ in the non-autonomous system contains the translates (by a fixed number depending on the rate of convergence) of set of times of expansitivity of $U'$ in the autonomous system and vice-versa. Consequently, if the autonomous(non-autonomous) system is cofinitely sensitive then non-autonomos(autonomous) system is also cofinite sensitive and hence cofinite sensitivity is equivalent for the two systems.
\end{Remark}

\begin{Cor}\label{ss}
Let $(X,\mathbb{F})$ be a non-autonomous system generated by a family $\mathbb{F}$ of feeble open maps commuting with $f$. If $\sum \limits_{n=1}^{\infty} D(f_n,f) < \infty$ then $(X,f)$ is cofinitely sensitive $\Leftrightarrow$ $(X,\mathbb{F})$ is cofinitely sensitive.
\end{Cor}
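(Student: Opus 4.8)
The plan is to re-run the argument of Proposition \ref{sen} almost verbatim, upgrading every assertion ``there exists a time of expansion'' to ``all sufficiently large times are times of expansion''. The one genuinely new feature is that the single shift $n_0$ delivered by collective convergence is \emph{independent of the iterate} $k$, so a cofinite threshold for one system transfers to a cofinite threshold for the other simply by translating by $n_0$. Concretely, by Corollary \ref{apcor2} the sequences $\{(\omega^n_{n+k}):k\in\mathbb{N}\}$ converge collectively to $\{f^k:k\in\mathbb{N}\}$, which I record in the usable form: for every $m\in\mathbb{N}$ there exists $n_0\in\mathbb{N}$ with $d(\omega_{n_0+k}(x),f^k(\omega_{n_0}(x)))<\frac{1}{m}$ for all $x\in X$ and \emph{all} $k\in\mathbb{N}$ at once.

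For the forward direction, let $\delta>0$ be a constant of cofinite sensitivity for $(X,f)$, pick $m$ with $\frac{1}{m}<\frac{\delta}{4}$, and take the corresponding $n_0$. Given a non-empty open $U$, feeble openness of $\mathbb{F}$ makes $W=int(\omega_{n_0}(U))$ non-empty and open, so cofinite sensitivity of $f$ supplies $K\in\mathbb{N}$ with $diam(f^k(W))>\delta$ for every $k\geq K$. For each such $k$ I would choose $v_1,v_2\in W\subseteq\omega_{n_0}(U)$ with $d(f^k(v_1),f^k(v_2))>\delta$, lift them to $v_1',v_2'\in U$ via $v_i=\omega_{n_0}(v_i')$, and combine the triangle inequality with the uniform bound $\frac{1}{m}$ to obtain $d(\omega_{n_0+k}(v_1'),\omega_{n_0+k}(v_2'))>\delta-\frac{2}{m}>\frac{\delta}{2}$. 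Hence $diam(\omega_n(U))>\frac{\delta}{2}$ for every $n\geq n_0+K$, which is precisely cofinite sensitivity of $(X,\mathbb{F})$ with constant $\frac{\delta}{2}$ and threshold $n_0+K$.

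The converse is the mirror image. Using continuity and surjectivity of $\omega_{n_0}$, I replace $\omega_{n_0}(U)$ by the non-empty open set $U'=\omega_{n_0}^{-1}(U)$; as already observed for Proposition \ref{sen}, feeble openness is not needed in this direction. Cofinite sensitivity of $(X,\mathbb{F})$ furnishes $K\in\mathbb{N}$ with $diam(\omega_n(U'))>\delta$ for all $n\geq K$, and for each $n=n_0+k\geq K$ I would push the witnessing pair forward through $\omega_{n_0}$, absorbing the error $\frac{2}{m}$ exactly as above, to conclude $diam(f^k(U))>\frac{\delta}{2}$ for every $k$ with $n_0+k\geq K$. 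This is cofinite sensitivity of $(X,f)$, and since $m$ may be taken arbitrarily large the constant is preserved, as noted in the preceding remark.

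The step I expect to carry the weight is the very first one: extracting a shift $n_0$ that is uniform in $k$. This is exactly where cofinite sensitivity departs from ordinary sensitivity. For mere sensitivity a single separation time suffices and a $k$-dependent approximation would do; here the threshold $K$ of $f$ must convert into one threshold $n_0+K$ for $\mathbb{F}$, and that conversion collapses unless $d(\omega_{n_0+k}(x),f^k(\omega_{n_0}(x)))<\frac{1}{m}$ holds for all $k$ simultaneously. Collective convergence (Corollary \ref{apcor2}), rather than the weaker fibrewise uniform convergence of $\omega^n_{n+k}$ to $f^k$ for each fixed $k$, is precisely what supplies this, so I would make the appeal to collective convergence explicit and let the remaining estimates run in parallel with Proposition \ref{sen}.
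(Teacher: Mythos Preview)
Your proposal is correct and follows essentially the same approach the paper indicates in the remark preceding Corollary~\ref{ss}: for each open set $U$ one produces an open set $U'$ (via $int(\omega_{n_0}(U))$ in one direction and $\omega_{n_0}^{-1}(U)$ in the other) whose set of expansion times, translated by the fixed shift $n_0$ coming from collective convergence, sits inside the set of expansion times of $U$ for the other system. Your explicit identification of collective convergence (Corollary~\ref{apcor2}) as the mechanism that makes $n_0$ uniform in $k$---and hence converts a cofinite threshold to a cofinite threshold---is exactly the point the paper is gesturing at.
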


\begin{Remark}
The above proof establishes the equivalence of sensitivity (and strong sensitivity) for the two systems $(X,\mathbb{F})$ and $(X,f)$. However, the proof once again utilizes the collective convergence guaranteed by "commutative condition" and "fast convergence" and does not use any of the imposed conditions explicitly. Hence the above conclusions hold good under collective convergence and can hold good without "commutative condition" and "fast convergence". Further, it is worth noting that feeble openness is a necessary condition for the results to hold good and hence cannot be dropped.  We now give some examples in support of our statement.
\end{Remark}

\begin{ex}
Let $S^1$ be the unit circle and let $f_n:S^1\rightarrow S^1$ be defined as $f_n(\theta) = 2\theta+\frac{1}{n}$. Then $(f_n)$ converges uniformly to the angle doubling map (say $f$). It may be noted that $\sum \limits_{n=1}^{\infty} D(f_n,f)$ is infinite and the family $\mathbb{F}$ does not commute with the limit map. However, if $\mathbb{F}=\{f_n: n\in\mathbb{N}\}$ then both $(X,\mathbb{F})$ and $(X,f)$ exhibit all forms of mixing. Further, as both $(X,\mathbb{F})$ and $(X,f)$ exhibit sensitivity (and cofinite sensitivity), neither of the conditions imposed is necessary for preserving any kind of mixing or sensitivity.
\end{ex}

\begin{ex}\label{sens}
Let $I$ be the unit interval and let $f:I\rightarrow I$ be defined as

$f(x) = \left\{%
\begin{array}{ll}
            2x  & \text{for x} \in [0, \frac{1}{2}] \\
           2-2x & \text{for x} \in [\frac{1}{2}, 1] \\
\end{array} \right.$ and let

$g(x) = \left\{%
\begin{array}{ll}
            1  & \text{for x} \in [0, \frac{1}{2}] \\
           2-2x & \text{for x} \in [\frac{1}{2}, 1]
\end{array} \right.$ \\

Let $(X,\mathbb{F})$ be the non-autonomous system generated by $\mathbb{F}=\{g,f,f,\ldots\}$. It may be noted that as $\omega^n_{n+k}=f^k$, collective convergence of $\{(\omega^n_{n+k}):k\in\mathbb{N}\}$ is ensured.  However, as $(X,f)$ exhibits all forms of mixing and sensitivities but $(X,\mathbb{F})$ does not exhibit any form of mixing or sensitivity, feeble openness of the family $\mathbb{F}$ is a necessary condition for preserving any notion of mixing or sensitivity.
\end{ex}

\begin{Proposition}\label{pp}
Let $(X,\mathbb{F})$ be a non-autonomous system generated by a family $\mathbb{F}$. If $(f_n)$ converges uniformly to $f$, then, $x$ is periodic for $(X,\mathbb{F}) \Rightarrow x$ is periodic for $(X,f)$.
\end{Proposition}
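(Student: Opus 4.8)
The plan is to exploit the defining feature of periodicity in the non-autonomous setting—that $\omega_{nk}(x)=x$ holds for \emph{every} $k\in\mathbb{N}$, not merely $\omega_n(x)=x$—together with the uniform convergence $f_m\to f$ and the continuity of the limit map $f$ (which is automatic, since $f$ is a uniform limit of the continuous maps $f_n$).

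Suppose $x$ is periodic for $(X,\mathbb{F})$ with period $n$, so that $\omega_{nk}(x)=x$ for all $k\in\mathbb{N}$. First I would prove, by induction on $j\geq 0$, that
\[
\lim_{k\to\infty}\omega_{nk+j}(x)=f^{j}(x).
\]
The base case $j=0$ is immediate, since $\omega_{nk}(x)=x=f^{0}(x)$ is constant in $k$. For the inductive step, I write $\omega_{nk+j+1}(x)=f_{nk+j+1}(\omega_{nk+j}(x))$ and insert the intermediate point $f(\omega_{nk+j}(x))$ via the triangle inequality to obtain
\[
d\bigl(\omega_{nk+j+1}(x),f^{j+1}(x)\bigr)\leq D(f_{nk+j+1},f)+d\bigl(f(\omega_{nk+j}(x)),f(f^{j}(x))\bigr).
\]
Letting $k\to\infty$, the first term vanishes because $f_m\to f$ uniformly (so $D(f_m,f)\to 0$), and the second vanishes by continuity of $f$ applied to the inductive hypothesis $\omega_{nk+j}(x)\to f^{j}(x)$.

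Finally I would specialize this limit to $j=n$. By periodicity $\omega_{nk+n}(x)=\omega_{n(k+1)}(x)=x$ for every $k$, so the left-hand side is the constant sequence $x$, while the right-hand side is $f^{n}(x)$. Hence $f^{n}(x)=x$, and $x$ is periodic for $(X,f)$.

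The only genuine work lies in the inductive convergence claim, which is precisely where both hypotheses enter, and I expect this to be the main (though modest) obstacle: the error must be split into a ``map-difference'' part controlled by uniform convergence and a ``continuity'' part controlled by the inductive hypothesis. It is worth emphasizing that the argument is inherently one-directional—the non-autonomous notion of period demands the return condition at \emph{all} multiples $nk$, and it is exactly this that makes the $j=0$ base case a constant sequence and forces $f^{n}(x)=x$ in the limit; no such conclusion would follow from $\omega_n(x)=x$ alone.
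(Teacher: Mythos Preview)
Your proof is correct and follows essentially the same route as the paper's. Both arguments boil down to showing that $\omega^{m}_{m+n}\to f^{n}$ (for the fixed period $n$, as $m\to\infty$) and then evaluating at the periodic point, where $\omega_{m}(x)=x$ along the subsequence $m=nk$; the paper simply invokes this convergence as a fact (noted in an earlier remark), while you unpack it into an explicit induction on $j$ using the split $D(f_{nk+j+1},f)+d(f(\omega_{nk+j}(x)),f(f^{j}(x)))$. The only cosmetic difference is that the paper works with uniform convergence of the composed maps on all of $X$, whereas your induction is pointwise at $x$ and uses continuity of $f$ at the single point $f^{j}(x)$---a slightly weaker requirement, but not a substantively different idea.
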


\begin{proof}
Let $x_0$ be periodic for $(X,\mathbb{F})$ with period $k$ and let $\epsilon>0$ be given. As $\omega^n_{n+k}$ converges uniformly to $f^k$, there exists $n_0\in\mathbb{N}$ such that $D(\omega^n_{n+k},f^k)<\epsilon~~\forall n\geq n_0$. Therefore, $d(\omega^{n_0k}_{n_0k+k}(x),f^k(x))<\epsilon$ for any $x\in X$ and hence $d(\omega^{n_0k}_{n_0k+k}(\omega_{n_0k}(x_0)),f^k(\omega_{n_0k}(x_0)))<\epsilon$. As $\omega_{rk}(x_0)=x_0$ for all $r$, the above argument ensures $d(f^k(x_0),x_0)<\epsilon$. As the result holds good for any $\epsilon>0$ we have $f^k(x_0)=x_0$ and hence $x_0$ is periodic for $f$ with period $k$.
\end{proof}

%We now give an example to show that various forms of sensitivities can be equivalent for the two systems even in the absence of the stated conditions.
%
%\begin{ex}
%Let $I$ be the unit interval and let $S^1$ denote the unit circle. For each $n\in\mathbb{N}$, let $f_n:I\times S^1\rightarrow I\times S^1$ be defined as $f_n((h,\theta))=(h+\frac{1}{n},\theta+h)$. Then $f_n$ converges uniformly to $f:I\times S^1\rightarrow I\times S^1$ where $f((h,\theta))=(h,\theta+h)$. It may be noted that $f_n$ do not commute with $f$ and $\sum \limits_{n=1}^{\infty} D(f_n,f)$ is infinite. However, as $\omega_{2n}$ rotates the point $theta$ by $2n\alpha$, both $(X,\mathbb{F})$ and $(X,f)$ are minimal although the sequence does not converge at sufficiently fast rate. As the sequence $(f_n)$ is equicontinuous (transitive), the example shows that rate of convergence is not a necessary condition for properties like equicontinuity (minimality or transitivity) to be equivalent for the two systems.
%\end{ex}

\begin{Remark}
The result establishes that if $x$ is periodic for $(X,\mathbb{F})$ then $x$ is periodic for $(X,f)$. The proof utilizes the fact that if $(f_n)$ converges uniformly to $f$ then for any fixed $k\in\mathbb{N}$, $\omega^n_{n+k}$ converges to $f^k$. As the result utilizes the convergence of $\omega^n_{n+k}$ for a fixed $k$, the proof requires only the uniform convergence of $(f_n)$ and is true without any other additional assumptions. Consequently if $(X,\mathbb{F})$ has dense set of periodic points then $(X,f)$ also exhibits dense set of periodic points. However, the results only establishes the preservance of periodic points in one direction and existence of periodic points (or dense set of periodic points) is not equivalent for the two systems.
%Further, using similar argument, it can be shown that if $x$ is recurrent for $(X,\mathbb{F})$ then $x$ is recurrent for $(X,f$ and hence recurrent points are preserved in one direction. Once again, the converse is not true and existence of recurrent points (or dense set of recurrent points) is not equivalent for the two systems.
\end{Remark}

\begin{Cor}\label{pp2}
Let $(X,\mathbb{F})$ be a non-autonomous system generated by a family $\mathbb{F}$. If $(f_n)$ converges uniformly to $f$, then, $(X,\mathbb{F})$ has dense set of periodic points $\Rightarrow(X,f)$ has dense set of periodic points.
\end{Cor}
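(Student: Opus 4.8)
The plan is to deduce this immediately from Proposition \ref{pp}, which has already established the pointwise statement that every point periodic for $(X,\mathbb{F})$ is periodic for $(X,f)$. The entire content of the corollary is the passage from this pointwise containment to a statement about density, so the proof amounts to a short set-theoretic observation once the right sets are named.

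First I would let $P_{\mathbb{F}}$ denote the set of all points periodic for $(X,\mathbb{F})$ and let $P_{f}$ denote the set of all points periodic for $(X,f)$. The hypothesis that $(X,\mathbb{F})$ has a dense set of periodic points says precisely that $P_{\mathbb{F}}$ is dense in $X$. Next I would invoke Proposition \ref{pp}: applying it to an arbitrary $x\in P_{\mathbb{F}}$ yields $x\in P_{f}$, and since $x$ was arbitrary this gives the inclusion $P_{\mathbb{F}}\subseteq P_{f}$. Note that the proposition requires only uniform convergence of $(f_n)$ to $f$, which is exactly the hypothesis we are granted, so no additional assumptions (commutativity or summability of $\sum D(f_n,f)$) are needed here.

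Finally I would conclude by the elementary fact that any superset of a dense set is dense: since $P_{\mathbb{F}}$ is dense in $X$ and $P_{\mathbb{F}}\subseteq P_{f}$, the closure of $P_{f}$ contains the closure of $P_{\mathbb{F}}$, which is all of $X$, and hence $P_{f}$ is dense in $X$. This is exactly the assertion that $(X,f)$ has a dense set of periodic points.

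There is essentially no obstacle in this argument; the real work was done in Proposition \ref{pp}, and this corollary is a formal consequence. The only point worth stating carefully is that the containment $P_{\mathbb{F}}\subseteq P_{f}$ is applied pointwise and uniformly over all periodic points at once, which is legitimate because Proposition \ref{pp} is valid for an arbitrary periodic point $x$ without reference to its period. I would therefore keep the write-up to a few lines, emphasizing the inclusion and the denseness-of-supersets step rather than repeating any of the estimates from the proof of Proposition \ref{pp}.
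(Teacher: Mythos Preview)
Your proposal is correct and follows exactly the approach the paper intends: the corollary is stated immediately after the remark noting that Proposition \ref{pp} gives $P_{\mathbb{F}}\subseteq P_f$, from which density passes trivially. There is nothing to add or change.
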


\begin{ex}
Let $S^1$ denote the unit circle and let $\mathbb{F}=\{f_n:n\in\mathbb{N}\}$ where $f_n:S^1\rightarrow S^1$ is defined as $f_n(\theta)=\theta+\frac{1}{n^2}$. Then $f_n$'s are rotations on unit circle converging uniformly to identity $I$ and hence every point is periodic for $(S^1,I)$. However, as $\sum\limits_{n=1}^{\infty}\frac{1}{n^2}= \frac{\pi^2}{6}<2\pi$, the non-autonomous system $(S^1,\mathbb{F})$ does not have any periodic point and hence the converse of the above result is not true.
\end{ex}

\begin{Proposition}\label{nads}
Let $X$ be compact and let $(X,\mathbb{F})$ be a sensitive non-autonomous system generated by a family $\mathbb{F}$. Then for any $x\in X$, $Prox_{\mathbb{F}}(x$) is dense in $X\Leftrightarrow$ $LY_{\mathbb{F}}(x$) is dense in $X$.
\end{Proposition}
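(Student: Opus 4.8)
The plan is to prove the two implications separately, and almost all the work is in one of them. The forward direction is immediate from an inclusion: every Li--Yorke pair is in particular proximal, since a Li--Yorke pair $(x,y)$ satisfies $\liminf_n d(\omega_n(x),\omega_n(y))=0$ by definition. Hence $LY_{\mathbb{F}}(x)\subseteq Prox_{\mathbb{F}}(x)$ for every $x$, and density of $LY_{\mathbb{F}}(x)$ trivially forces density of $Prox_{\mathbb{F}}(x)$. The substantive content is the converse: assuming $Prox_{\mathbb{F}}(x)$ is dense, I would produce, inside every non-empty open set $V$, a point $y$ that is a Li--Yorke partner of $x$, i.e.\ with $\liminf_n d(\omega_n(x),\omega_n(y))=0$ and $\limsup_n d(\omega_n(x),\omega_n(y))>0$.

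Fix the sensitivity constant $\delta>0$. The construction rests on two elementary observations. First, \emph{proximality yields arbitrarily late close encounters}: if $p\in Prox_{\mathbb{F}}(x)$ then $\liminf_n d(\omega_n(x),\omega_n(p))=0$, so for any $N$ and any $\eta>0$ there is a time $n>N$ with $d(\omega_n(x),\omega_n(p))<\eta$, and by continuity of $\omega_n$ this inequality persists on a whole open neighbourhood of $p$. Second, \emph{sensitivity yields arbitrarily late separations with an open witness}: applied to any non-empty open $W$, sensitivity gives a time $m$ with $diam(\omega_m(W))>\delta$; moreover $m$ may be taken larger than any prescribed $N$, because shrinking $W$ to a small enough sub-ball first makes $diam(\omega_j(W))<\delta$ for all $j\le N$ (the finitely many $\omega_j$ being continuous), forcing the sensitivity time past $N$. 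Choosing $a,b\in W$ with $d(\omega_m(a),\omega_m(b))>\delta$, the triangle inequality gives $\max\{d(\omega_m(x),\omega_m(a)),d(\omega_m(x),\omega_m(b))\}>\delta/2$, so some $z\in W$ has $d(\omega_m(x),\omega_m(z))>\delta/2$, and continuity of $\omega_m$ makes this persist on an open neighbourhood of $z$.

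With these in hand I would build, by alternating the two observations, a decreasing chain of non-empty closed balls $\overline{B_1}\supseteq \overline{B_1'}\supseteq \overline{B_2}\supseteq \overline{B_2'}\supseteq\cdots$ inside $V$ together with strictly increasing times $m_1<n_1<m_2<n_2<\cdots$. At the $k$-th ``far'' step I apply the sensitivity observation to the current ball to obtain a time $m_k>n_{k-1}$ and a smaller closed ball $\overline{B_k}$ (radius $<2^{-k}$) on which $d(\omega_{m_k}(x),\omega_{m_k}(\,\cdot\,))>\delta/2$; at the $k$-th ``close'' step I use density of $Prox_{\mathbb{F}}(x)$ to pick a proximal point in $int(\overline{B_k})$, then the proximality observation to obtain a time $n_k>m_k$ and a smaller closed ball $\overline{B_k'}$ (radius $<2^{-k}$) on which $d(\omega_{n_k}(x),\omega_{n_k}(\,\cdot\,))<1/k$. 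Compactness of $X$ makes the nested intersection non-empty; any $y\in\bigcap_k \overline{B_k'}$ lies in $V$, satisfies $d(\omega_{n_k}(x),\omega_{n_k}(y))<1/k$ for all $k$ (so $\liminf_n d(\omega_n(x),\omega_n(y))=0$) and $d(\omega_{m_k}(x),\omega_{m_k}(y))>\delta/2$ for all $k$ (so $\limsup_n d(\omega_n(x),\omega_n(y))\ge \delta/2>0$). Thus $y\in LY_{\mathbb{F}}(x)\cap V$, and since $V$ is arbitrary, $LY_{\mathbb{F}}(x)$ is dense.

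I expect the main obstacle to be the bookkeeping in the converse: one must verify at each stage that the relevant set still has non-empty interior (so that both density of the proximal cell and the sensitivity condition can be re-applied), that the pointwise witness inequalities can be propagated to open sets through continuity of the finite compositions $\omega_m$, and above all that the separation and encounter times can always be pushed beyond those already chosen, so that both the $\liminf$ and the $\limsup$ are read off along genuinely divergent subsequences. Compactness of $X$ is what finally guarantees a point in the infinite intersection, and it is worth noting that no commutativity or rate-of-convergence hypothesis on $\mathbb{F}$ enters here: only sensitivity and the density of $Prox_{\mathbb{F}}(x)$ are used.
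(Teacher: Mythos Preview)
Your argument is correct and follows essentially the same route as the paper: establish that points $\frac{\delta}{2}$-separated from $x$ are dense via the triangle inequality applied to a sensitivity pair, then alternate proximal close-encounters with sensitivity separations, propagate each witness to an open set by continuity of the finite composition $\omega_n$, and extract a Li--Yorke partner from the nested intersection via compactness. One cosmetic point: you call the trivial inclusion $LY_{\mathbb{F}}(x)\subseteq Prox_{\mathbb{F}}(x)$ the ``forward'' direction, whereas in the biconditional as stated it is the converse; the paper uses the same labeling as the statement. On substance, you are in fact slightly more careful than the paper in one respect: you explicitly argue that the separation and encounter times $m_k,n_k$ can be pushed past any prescribed bound (by pre-shrinking the ball so that the finitely many earlier $\omega_j$ cannot separate it, and by using the $\liminf$ characterisation of proximality), so that the $\limsup$ and $\liminf$ are witnessed along genuinely divergent subsequences. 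The paper's write-up leaves this implicit, though it is easily supplied.
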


\begin{proof}
For any $x\in X$, it may be noted that if $(X,\mathbb{F})$ is sensitive with sensitivity constant $\delta$, then for any $y\in X$ and any neighborhood $U_y$ of $y$, there exists $y'\in U_y$ and $k\in\mathbb{N}$ such that $d(\omega_n(y),\omega_n(y'))>\delta$. By triangle inequality, $d(\omega_n(x),\omega_n(y))>\frac{\delta}{2}$ or $d(\omega_n(x),\omega_n(y'))>\frac{\delta}{2}$ and hence the set of points $\frac{\delta}{2}$-sensitive to $x$ are dense in $X$.

Let $\epsilon>0$ be fixed. For any $x\in X$ and any non-empty open subset $U$ of $X$, let $V$ be non-empty open such that $V\subset\overline{V}\subset U$. As $Prox_{\mathbb{F}}(x)$ is dense in $X$, there exists $y\in V$ such that the pair $(x,y)$ is proximal and hence there exists $n_1\in\mathbb{N}$ such that $d(\omega_{n_1}(x),\omega_{n_1}(y))<\epsilon$. By continuity, there exists a neighborhood $U_1(\subset V)$ of $y$ such that $d(\omega_{n_1}(x),\omega_{n_1}(u_1))<\epsilon$ for all $u_1\in U_1$. As the set of points $\frac{\delta}{2}$-sensitive to $x$ are dense in $X$, there exists $y_1\in U_1$ and $m_1\in\mathbb{N}$ such that $d(\omega_{m_1}(x),\omega_{m_1}(y_1))>\frac{\delta}{2}$ and once again by continuity there exists a neighborhood $V_1(\subset U_1\subset V)$ of $y_1$ such that $d(\omega_{n_1}(x),\omega_{n_1}(v_1))<\epsilon$  and $d(\omega_{m_1}(x),\omega_{m_1}(v_1))>\frac{\delta}{2}$ for all $v_1\in V_1$. Hence for $\epsilon>0$ and any pair $x,U$ (where $x\in X$ and $U$ is non-empty open subset of $X$) there exists $n,m\in\mathbb{N}$ and a non-empty open subset $U_{\epsilon}$ of $X$ (satisfying $U_{\epsilon}\subset \overline{U_{\epsilon}}\subset U$) such that $d(\omega_{m}(x),\omega_{m}(u))>\frac{\delta}{2}$ and $d(\omega_{n}(x),\omega_{n}(u))<\epsilon$ for all $u\in U_{\epsilon}$.

Let $x\in X$ and $U$ be any non-empty open subset of $X$. By argument above there exists non-empty open set $U_1$, $U_1\subset \overline{U_1}\subset U$ and $n_1,m_1\in\mathbb{N}$ such that $d(\omega_{m_1}(x),\omega_{m_1}(y))>\frac{\delta}{2}$ and $d(\omega_{n_1}(x),\omega_{n_1}(y))<\frac{1}{2}$ for all $y\in U_{1}$. Repeating the process for the pair $(x,U_1)$, there exists $U_2$ satisfying $U_2\subset \overline{U_2}\subset U_1$ and $n_2,m_2\in\mathbb{N}$ such that $d(\omega_{m_2}(x),\omega_{m_2}(y))>\frac{\delta}{2}$ and $d(\omega_{n_2}(x),\omega_{n_2}(y))<\frac{1}{4}$ for all $y\in U_{2}$. Inductively, we obtain a decreasing sequence $U_k$ of non-empty open subsets of $X$ such that $U_k\subset\overline{U_k}\subset U_{k-1}$ and sequences $(n_k),(m_k)\in\mathbb{N}$ such that $d(\omega_{m_k}(x),\omega_{m_k}(y))>\frac{\delta}{2}$ and $d(\omega_{n_k}(x),\omega_{n_k}(y))<\frac{1}{2^k}$ for all $y\in U_{k}$. As $X$ is compact $\bigcap\limits_{k=1}^{\infty} \overline{U_k}\neq \phi$. Then for any $u\in\bigcap\limits_{k=1}^{\infty}\overline{U_k}$, we have $d(\omega_{m_k}(x),\omega_{m_k}(u))>\frac{\delta}{2}$ and $d(\omega_{n_k}(x),\omega_{n_k}(u)<\frac{1}{2^k}$ for all $k$ and hence $(x,u)$ is a Li-Yorke pair. As the argument holds good for any non-empty open set $U$, $LY_{\mathbb{F}}(x$) is dense in $X$.

As every Li-Yorke pair is proximal, proof of the converse part is trivial.
\end{proof}
\begin{Remark}
The result establishes that for sensitive systems, if proximal cells are dense in $X$ for $(X,\mathbb{F})$ then Li-Yorke cells are also dense. It may be noted that the result does not use the compactness of the space $X$ completely and holds good for locally compact spaces also. Further, the proof does not use denseness of the proximal cell completely and establishes that for a sensitive system, if proximal cell of a point $x$ is dense in a neighborhood of $x$ then Li-Yorke cell is dense in a neighborhood of $x$. Consequently, a similar argument establishes that for a sensitive system, if proximal cell of a point $x$ is dense in a neighborhood of $x$ then $x$ is point of Li-Yorke sensitivity. The result is a natural extension of the result established in \cite{kol} for the autonomous systems.
\end{Remark}

\begin{Cor}
Let $(X,f)$ be a compact sensitive system. Then for any $x\in X$, $Prox(x)$ is dense in $X \Leftrightarrow LY(x)$ is dense in X.
\end{Cor}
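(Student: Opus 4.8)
The plan is to obtain this corollary as the autonomous specialization of Proposition \ref{nads}, rather than reproving the Li--Yorke construction from scratch. Given the compact sensitive autonomous system $(X,f)$, I would form the non-autonomous system $(X,\mathbb{F})$ generated by the constant sequence $\mathbb{F}=\{f_n=f:n\in\mathbb{N}\}$. With this choice every composition collapses, namely $\omega_n=f\circ f\circ\cdots\circ f=f^n$ for all $n$, so the non-autonomous orbit of a point coincides exactly with its autonomous orbit.

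First I would check that the hypotheses of Proposition \ref{nads} are met. The space $X$ is compact by assumption. Because $\omega_n=f^n$, the non-autonomous sensitivity condition (the existence of $\delta>0$ such that every neighborhood $U$ of every $x$ admits $n$ with $diam(\omega_n(U))>\delta$) is literally the autonomous sensitivity of $(X,f)$; hence $(X,\mathbb{F})$ is sensitive with the same constant $\delta$. Next I would identify the two notions of cell: since $\omega_n=f^n$, the defining conditions $\liminf_{n\to\infty} d(\omega_n(x),\omega_n(y))=0$ and $\limsup_{n\to\infty} d(\omega_n(x),\omega_n(y))>\delta$ become the autonomous conditions for a proximal pair and a Li--Yorke pair respectively. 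Therefore $Prox_{\mathbb{F}}(x)=Prox(x)$ and $LY_{\mathbb{F}}(x)=LY(x)$ for every $x\in X$. Applying Proposition \ref{nads} then gives that $Prox(x)$ is dense in $X$ if and only if $LY(x)$ is dense in $X$, which is exactly the claim.

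The step I expect to require the most care --- though it remains light --- is verifying the two reductions cleanly: that constant $f_n$ forces $\omega_n=f^n$, and that under this identification the non-autonomous sensitivity, proximality, and Li--Yorke definitions recorded in the introduction reduce verbatim to their autonomous counterparts. There is no genuine obstacle here; once the dictionary $\omega_n\leftrightarrow f^n$ is in place the result is immediate. Alternatively, one could bypass the reduction and simply rerun the nested-neighborhood argument of Proposition \ref{nads} with $\omega_n$ replaced by $f^n$ throughout, using compactness of $X$ to extract a point of $\bigcap_{k}\overline{U_k}$ that is simultaneously proximal to $x$ and infinitely often $\frac{\delta}{2}$-separated from $x$; but the specialization route is shorter and reuses the established proposition directly.
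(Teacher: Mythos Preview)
Your proposal is correct and matches the paper's treatment: the corollary is stated without proof immediately after Proposition~\ref{nads}, as the autonomous case obtained by taking the constant family $f_n=f$, which is precisely the specialization you carry out.
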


\begin{Proposition}\label{pr}
Let $(X,\mathbb{F})$ be a non-autonomous system generated by a family $\mathbb{F}$ commuting with $f$. If $\sum \limits_{n=1}^{\infty} D(f_n,f) < \infty$ then, $(x,y)$ is proximal for $(X,f) \Rightarrow(x,y)$ is proximal for $(X,\mathbb{F})$.
\end{Proposition}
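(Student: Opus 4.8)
The plan is to exploit the collective approximation of $\omega_{r+k}$ by $f^k\circ\omega_r$ furnished by Corollary \ref{apcor}, together with the fact that the commutativity of $\mathbb{F}$ with $f$ propagates to $\omega_r$. Recall that $(x,y)$ being proximal for $(X,f)$ means $\liminf_{n\to\infty} d(f^n(x),f^n(y))=0$, so there are arbitrarily large $k\in\mathbb{N}$ for which $d(f^k(x),f^k(y))$ is as small as we please; the goal is to produce arbitrarily large indices $n$ with $d(\omega_n(x),\omega_n(y))$ small, which will force $\liminf_{n\to\infty} d(\omega_n(x),\omega_n(y))=0$.

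Fix $\epsilon>0$. First I would use $\sum_{n=1}^{\infty} D(f_n,f)<\infty$ to choose $r\in\mathbb{N}$ with $\sum_{i=1}^{\infty} D(f_{r+i},f)<\frac{\epsilon}{3}$, so that Corollary \ref{apcor} gives $d(\omega_{r+k}(z),f^k(\omega_r(z)))<\frac{\epsilon}{3}$ for every $z\in X$ and every $k\in\mathbb{N}$. Since $\omega_r=f_r\circ\cdots\circ f_1$ is a fixed continuous self map of the compact space $X$, it is uniformly continuous; I would then choose $\delta>0$ so that $d(a,b)<\delta$ forces $d(\omega_r(a),\omega_r(b))<\frac{\epsilon}{3}$.

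The key observation is that each $f_i$ commutes with $f$, hence $\omega_r$ commutes with $f$ and therefore with $f^k$, giving $f^k(\omega_r(z))=\omega_r(f^k(z))$ for all $z\in X$ and all $k$. Now invoke proximality of $(x,y)$ under $f$: there exist arbitrarily large $k$ with $d(f^k(x),f^k(y))<\delta$, whence $d(\omega_r(f^k(x)),\omega_r(f^k(y)))<\frac{\epsilon}{3}$, i.e. $d(f^k(\omega_r(x)),f^k(\omega_r(y)))<\frac{\epsilon}{3}$. A triple application of the triangle inequality,
$$d(\omega_{r+k}(x),\omega_{r+k}(y))\le d(\omega_{r+k}(x),f^k(\omega_r(x)))+d(f^k(\omega_r(x)),f^k(\omega_r(y)))+d(f^k(\omega_r(y)),\omega_{r+k}(y)),$$
then bounds the left-hand side by $\epsilon$. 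Because such $k$ occur for arbitrarily large values, the indices $n=r+k$ are unbounded, so $\liminf_{n\to\infty} d(\omega_n(x),\omega_n(y))\le\epsilon$; as $\epsilon>0$ was arbitrary, the $\liminf$ equals $0$ and $(x,y)$ is proximal for $(X,\mathbb{F})$.

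The main obstacle, and the step where commutativity is genuinely used (beyond its role inside Corollary \ref{apcor}), is converting the hypothesis $d(f^k(x),f^k(y))\to 0$ into smallness of $d(f^k(\omega_r(x)),f^k(\omega_r(y)))$: without the identity $f^k\circ\omega_r=\omega_r\circ f^k$ there is no way to feed the $f$-proximal pair $(f^k(x),f^k(y))$ through the fixed map $\omega_r$ and exploit its uniform continuity. One must also be careful with the order of quantifiers, since $r$ is chosen from $\epsilon$, then $\delta$ from $r$ via uniform continuity of $\omega_r$, and only afterwards is the infinitude of $f$-proximal times $k$ used to keep $n=r+k$ arbitrarily large.
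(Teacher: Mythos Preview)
Your proof is correct and follows essentially the same approach as the paper's: choose $r$ from the tail of the series to control the error via Corollary~\ref{apcor}, use uniform continuity of $\omega_r$ to choose $\delta$ (the paper calls it $\eta$), invoke commutativity to swap $\omega_r$ and $f^k$, and finish with the same three-term triangle inequality. Your version is in fact slightly more explicit than the paper's in noting that the proximal times $k$ can be taken arbitrarily large so that the indices $r+k$ are unbounded, which is what actually forces the $\liminf$ to be $0$.
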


\begin{proof}
Let $\epsilon>0$ be given and let $(x,y)$ be proximal for $(X,f)$. As $(x,y)$ is proximal, there exists a sequence $(n_k)$ in $\mathbb{N}$ such that $\lim\limits_{n_k\rightarrow\infty} d(f^{n_k}(x),f^{n_k}(y))=0$. As $\sum \limits_{n=1}^{\infty} D(f_n,f) < \infty$, choose $r\in\mathbb{N}$ such that $\sum \limits_{n=r}^{\infty} D(f_n,f) < \frac{\epsilon}{3}$. As $\omega_r$ is continuous (and hence uniformly continuous), there exists $\eta>0$ such that for any $u,v\in X$, $d(u,v)<\eta$ implies $d(\omega_r(u),\omega_r(v))<\frac{\epsilon}{3}$. As $(x,y)$ is proximal for $(X,f)$, there exists $n_k\in\mathbb{N}$ such that $d(f^{n_k}(x),f^{n_k}(y))<\eta$ and hence $d(\omega_r(f^{n_k}(x)),\omega_r(f^{n_k}(y)))<\frac{\epsilon}{3}$ or $d(f^{n_k}(\omega_r(x)),f^{n_k}(\omega_r(y)))<\frac{\epsilon}{3}$ (by commutativity). Also by corollary \ref{apcor}, $d(\omega_{r+n_k}(u),f^{n_k}(\omega_r(u)))< \sum \limits_{i=1}^{n_k} D(f_{r+i},f)<\frac{\epsilon}{3}$ for any element $u$ in $X$. Thus by triangle inequality, $d(\omega_{r+n_k}(x),\omega_{r+n_k}(y))\leq d(\omega_{r+n_k}(x),f^{n_k}(\omega_r(x)))+ d(f^{n_k}(\omega_r(x)),f^{n_k}(\omega_r(y)))+d(f^{n_k}(\omega_r(y)),f^{n_k}(\omega_r(y)))<\epsilon$. As the proof works for any $\epsilon>0$, $(x,y)$ is proximal for $(X,\mathbb{F})$.
\end{proof}

\begin{Proposition}\label{prox}
Let $X$ be compact and let  $(X,\mathbb{F})$ be a non-autonomous system generated by a family $\mathbb{F}$ commuting with $f$. If $\sum \limits_{n=1}^{\infty} D(f_n,f) < \infty$ then, proximal cell of each $x$ is dense for $(X,f)$ $\Leftrightarrow$ proximal cell of each $x$ is dense for $(X,\mathbb{F})$.
\end{Proposition}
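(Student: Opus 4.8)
The plan is to treat the two implications separately, since they are of very different difficulty. For the forward implication I would simply invoke Proposition \ref{pr}: if $(x,y)$ is proximal for $(X,f)$ then it is proximal for $(X,\mathbb{F})$, so $Prox_{f}(x)\subseteq Prox_{\mathbb{F}}(x)$ for every $x$. Hence density of each $Prox_{f}(x)$ forces density of each $Prox_{\mathbb{F}}(x)$, and this direction is immediate.

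The reverse implication is the substantial one, and the main obstacle is that the converse of Proposition \ref{pr} need not hold for an individual pair: to pass from $(X,\mathbb{F})$ to $(X,f)$ through Corollary \ref{apcor} one must discard an initial block of length $r$, and the required $r$ grows as the target precision $\epsilon$ shrinks, so a single $\mathbb{F}$-proximal pair does not obviously yield an $f$-proximal pair. I would therefore not attempt to reverse the inclusion pointwise, but instead mimic the nested-compactness construction used in Proposition \ref{nads}.

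Concretely, I would first establish the following approximation step: given $x\in X$, a nonempty open $W$, $\epsilon>0$, and any $N$, there exist $m>N$ and a nonempty open $W'$ with $\overline{W'}\subseteq W$ such that $d(f^{m}(x),f^{m}(w))<\epsilon$ for all $w\in\overline{W'}$. To prove it I would use collective convergence (Corollary \ref{apcor}) to choose $r$ with $d(\omega_{r+k}(u),f^{k}(\omega_r(u)))<\epsilon/4$ for all $k,u$; then, using surjectivity of $\omega_r$, pick a preimage $x'$ with $\omega_r(x')=x$ and note that $\omega_r^{-1}(W)$ is nonempty open. Since $Prox_{\mathbb{F}}(x')$ is dense (hypothesis), choose $y'\in\omega_r^{-1}(W)$ proximal to $x'$ for $(X,\mathbb{F})$; proximality supplies arbitrarily large $n=r+k$ with $d(\omega_{r+k}(x'),\omega_{r+k}(y'))<\epsilon/4$, so $k>N$ may be assumed. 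A triangle-inequality estimate routed through $\omega_{r+k}(x')\approx f^{k}(\omega_r(x'))=f^{k}(x)$ and $\omega_{r+k}(y')\approx f^{k}(\omega_r(y'))=f^{k}(w)$, where $w:=\omega_r(y')\in W$, then gives $d(f^{k}(x),f^{k}(w))<3\epsilon/4$, and continuity of $f^{k}$ lets me fatten $w$ to the desired $W'$ with strict inequality on its closure.

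With this step in hand the proof concludes by the standard compactness argument: fix $x$ and a nonempty open $U$, and apply the step repeatedly with $\epsilon=2^{-j}$ and increasing return times to build a decreasing chain $U\supseteq\overline{U_1}\supseteq U_1\supseteq\overline{U_2}\supseteq\cdots$ together with $m_j\to\infty$ satisfying $d(f^{m_j}(x),f^{m_j}(w))<2^{-j}$ for all $w\in\overline{U_j}$. By compactness $\bigcap_{j}\overline{U_j}\neq\phi$, and any $z$ in this intersection lies in $U$ and satisfies $\liminf_{n} d(f^{n}(x),f^{n}(z))=0$, i.e.\ $z\in Prox_{f}(x)\cap U$. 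Thus $Prox_{f}(x)$ meets every nonempty open set and is dense. The only points demanding care are ensuring the return times $m_j$ can be pushed to infinity (guaranteed because $\mathbb{F}$-proximality is a $\liminf$ statement, so approach occurs at arbitrarily large times) and that the nesting $\overline{U_{j}}\subseteq U_{j-1}$ with the inequalities valid on closures is preserved at each stage.
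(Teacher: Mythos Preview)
Your proposal is correct and follows essentially the same approach as the paper: the forward direction via Proposition \ref{pr}, and the converse via pulling back through $\omega_r^{-1}$, applying density of $\mathbb{F}$-proximal cells to a preimage of $x$, transferring to $f$-iterates via Corollary \ref{apcor}, fattening by continuity, and running a nested-compactness argument. The only differences are organizational (you isolate the approximation step as a separate lemma before iterating, whereas the paper folds it into the induction) and your explicit insistence that the return times $m_j\to\infty$, which the paper omits but does not actually need, since if the $s_m$ were bounded the estimates would force $f^s(x)=f^s(u)$ for some $s$, which already gives proximality.
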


\begin{proof}
As every pair proximal for $(X,f)$ is proximal for $(X,\mathbb{F})$, if Prox($x$) is dense for $(X,f)$ then Prox($x$) is also  dense for $(X,\mathbb{F})$ and the proof of forward part is complete.

Conversely let $Prox_{\mathbb{F}}(x)$ be dense for each $x \in X$. Fix $x\in X$ and let $U$ be a non-empty open subset of $X$. Let $U_1$ be non-empty open such that $\overline{U_1}\subset U$. %As Prox($x$) is dense for $(X,\mathbb{F})$, pick $y_1\in U_1$ such that $(x,y_1)$ is proximal for $(X,\mathbb{F})$.
As $\sum \limits_{n=1}^{\infty} D(f_n,f) < \infty$, for each $m\in \mathbb{Z}^+$ there exists $r_m\in\mathbb{N}$ such that $\sum \limits_{n=r_m}^{\infty} D(f_n,f) < \frac{1}{3.2^m}$. Fix $m=1$ and let $V_{m}=\omega_{r_m}^{-1}(U_{m})$. Then $V_{m}$ is open in $X$. Pick any $x_m\in\omega_{r_m}^{-1}(x)$. As $Prox_{\mathbb{F}}(x_m)$ is dense for $(X,\mathbb{F})$, there exists a $z_{m}=\omega_{r_m}^{-1}(y_{m})\in V_{m}~~(y_{m}\in U_m)$ such that $(x_m,z_{m})$ is proximal for $(X,\mathbb{F})$. Thus, there exists a sequence $(n_{k,m})$ in $\mathbb{N}$ such that $\lim\limits_{n_{k,m}\rightarrow\infty}d(\omega_{n_{k,m}+r_{m}}(x_m),\omega_{n_{k,m}+r_{m}}(z_{m}))=0$. Choose $s_{m}\in\mathbb{N}$ such that $d(\omega_{s_{m}+r_{m}}(x_m),\omega_{s_{m}+r_{m}}(z_{m}))<\frac{1}{3.2^{m}}$. Also, by corollary \ref{apcor}, for any $w\in X$, $d(\omega_{s_{m}+r_{m}}(w), f^{s_{m}}(\omega_{r_{m}}(w)))< \sum \limits_{i=1}^{s_{m}} D(f_{r_{m}+i},f)<\frac{1}{3.2^{m}}$ and hence by triangle inequality, we have $d(f^{s_{m}}(\omega_{r_m}(x_m)),f^{s_{m}}(\omega_{r_m}(z_{m})))< \frac{1}{2^m}$ or $d(f^{s_{m}}(x),f^{s_{m}}(y_{m}))< \frac{1}{2^{m}}$. By continuity, there exists neighborhood $U_{m+1}$ of $y_{m}$ such that $\overline{U_{m+1}}\subset U_{m}$ and $d(f^{s_{m}}(\omega_{r_m}(x)),f^{s_{m}}(\omega_{r_m}(y)))< \frac{1}{2^m}$ for all  $y\in U_{m+1}$.

Repeating the process for pair $(x,U_m)$ (for each $m$), we obtain a open set $U_{m+1}$, $\overline{U_{m+1}}\subset U_{m}\subset U$ and $s_m\in\mathbb{N}$ such that $d(f^{s_m}(x),f^{s_{m}}(y))< \frac{1}{2^{m}}$ for all $y\in U_{m+1}$.  As $\overline{U_{m+1}}$ is a nested decreasing sequence of closed sets in a compact metric space and hence $\bigcap \limits_{m=1}^{\infty} \overline{U_{m}}\neq\phi$. Then, for any $u\in \bigcap \limits_{m=1}^{\infty} \overline{U_{m}}\subset U$, as $u\in U_{m}$ for all $m$, we have $d(f^{s_m}(x),f^{s_m}(u))< \frac{1}{2^m}$ for all $m$ and hence the pair $(x,u)$ is proximal for $(X,f)$. As the proof holds good for any non-empty open set $U$ in $X$, $Prox_f(x)$ is dense in $X$.
\end{proof}

\begin{Remark}\label{rem}
The above proof establishes that proximal cell of each $x$ is dense for $(X,f)$ if and only if proximal cell of each $x$ is dense for $(X,\mathbb{F})$ and hence the dynamical behavior of two systems in equivalent in this regard. However, the proof does not exploit the denseness condition and establishes that denseness of proximal cells in some neighborhood is equivalent for the two systems. Further, by Proposition \ref{nads}, if a system is sensitive then denseness of proximal cells and Li-Yorke cells is equivalent for the two systems. Equivalently, if proximal cells are dense for a system then the system is sensitive if and only if it is Li-Yorke sensitive. As sensitivity is equivalent for feeble open systems (with sufficiently fast rate of convergence), we obtain the following corollary.
\end{Remark}

\begin{Cor}\label{lys}
Let $(X,\mathbb{F})$ be a non-autonomous system generated by a family $\mathbb{F}$ of feeble open maps commuting with $f$ satisfying $\sum \limits_{n=1}^{\infty} D(f_n,f) < \infty$. If $Prox(x)$ is dense in $X$ for each $x\in X$, then $(X,f)$ is Li-Yorke sensitive $\Leftrightarrow$ $(X,\mathbb{F})$ is Li-Yorke sensitive.
\end{Cor}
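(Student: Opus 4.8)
The plan is to obtain the statement by chaining together Propositions \ref{sen}, \ref{prox} and \ref{nads}, the key pivot being the observation of Remark \ref{rem} that for a system all of whose proximal cells are dense, sensitivity and Li-Yorke sensitivity are the same property. I would first isolate this pivot as a self-contained equivalence valid for an arbitrary (non-autonomous or autonomous) system on the compact space $X$. One direction is immediate: if the system is Li-Yorke sensitive with constant $\delta$, then each $\delta$-scrambled witness $y$ in a neighbourhood $U$ of $x$ has $\limsup_n d(\omega_n(x),\omega_n(y)) > \delta$, so $diam(\omega_n(U)) > \delta$ for some $n$, i.e. the system is sensitive. For the reverse direction I would invoke the construction inside the proof of Proposition \ref{nads}: if the system is sensitive with constant $\delta$ and $Prox(x)$ is dense for every $x$, then for each fixed $x$ and each neighbourhood $U$ of $x$ that construction yields a point $u \in U$ for which $(x,u)$ is a $(\delta/2)$-scrambled pair. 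Since $\delta$ is uniform, this says every point is a point of Li-Yorke sensitivity with the common constant $\delta/2$, so the system is Li-Yorke sensitive.

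Next I would bring the two hypotheses to bear on both systems simultaneously. By Proposition \ref{prox}, under the commuting assumption and $\sum_{n=1}^{\infty} D(f_n,f) < \infty$ on the compact space $X$, the proximal cell of each $x$ is dense for $(X,f)$ if and only if it is dense for $(X,\mathbb{F})$; hence the hypothesis that $Prox(x)$ is dense for every $x$ transfers between the two systems and holds for both. By Proposition \ref{sen}, feeble openness together with the commuting and fast-convergence conditions gives that $(X,f)$ is sensitive if and only if $(X,\mathbb{F})$ is sensitive.

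Finally I would assemble the chain. If $(X,f)$ is Li-Yorke sensitive then it is sensitive by the easy direction above, so $(X,\mathbb{F})$ is sensitive by Proposition \ref{sen}; since $Prox_{\mathbb{F}}(x)$ is dense for every $x$, the pivot equivalence upgrades this to Li-Yorke sensitivity of $(X,\mathbb{F})$. The reverse implication is entirely symmetric, reading Proposition \ref{sen} in the other direction and applying the pivot to $(X,f)$ via denseness of $Prox_f(x)$. I expect the only point requiring care to be the bookkeeping of the sensitivity constant through the chain: one must verify that the constant produced by Proposition \ref{sen} and the constant $\delta/2$ furnished by Proposition \ref{nads} remain uniform in $x$, so that no quantifier over points is silently lost; granting this, the stated equivalence follows at once.
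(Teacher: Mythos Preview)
Your proposal is correct and follows exactly the route the paper takes: the paper's one-line proof cites Proposition \ref{sen} and Remark \ref{rem}, the latter packaging precisely your ``pivot'' (sensitive $\Leftrightarrow$ Li--Yorke sensitive when all proximal cells are dense, via Proposition \ref{nads}) together with the transfer of dense proximal cells from Proposition \ref{prox}. Your added care about the uniformity of the sensitivity constant is appropriate but unproblematic, since the $\delta$ in Proposition \ref{nads} is the global sensitivity constant and the resulting scrambled pairs are uniformly $\delta/2$-scrambled.
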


\begin{proof}
The result is a direct consequence of Proposition \ref{sen} and Remark \ref{rem}.
\end{proof}
%\begin{Proposition}
%Let $(X,\mathbb{F})$ be a non-autonomous system generated by a family $\mathbb{F}$ of feeble open maps commuting with $f$. If $\sum \limits_{n=1}^{\infty} D(f_n,f) < \infty$ then, proximal pairs are dense for $(X,f)$ $\Leftrightarrow$ proximal pairs are dense for $(X,\mathbb{F})$.
%\end{Proposition}

We now establish equivalence of denseness of proximal pairs for the two systems.

\begin{Proposition}\label{prp}
Let $X$ be compact and let $(X,\mathbb{F})$ be a non-autonomous system generated by a family $\mathbb{F}$ commuting with $f$. If $\sum \limits_{n=1}^{\infty} D(f_n,f) < \infty$ then, set of proximal pairs is dense for $(X,f) \Leftrightarrow$ set of proximal pairs is dense for $(X,\mathbb{F})$.
\end{Proposition}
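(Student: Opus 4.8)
The plan is to establish both directions using the collective convergence of $\{(\omega^n_{n+k}):k\in\mathbb{N}\}$ to $\{f^k:k\in\mathbb{N}\}$ guaranteed by Corollary \ref{apcor2}, following the template already used in Proposition \ref{prox} but now working in the product space $X\times X$ rather than fixing a point $x$. The key observation is that a pair $(x,y)$ being proximal depends only on the behavior of the orbits along a suitable sequence of times, and Corollary \ref{apcor} lets me transfer the estimate $d(\omega_{r+k}(u),f^k(\omega_r(u)))<\sum_{i=1}^k D(f_{r+i},f)$ simultaneously to both coordinates.

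For the forward direction, suppose the set of proximal pairs is dense for $(X,f)$. Given any basic open box $U\times V$ in $X\times X$, I would produce a proximal pair for $(X,\mathbb{F})$ inside it. Since the proximal pairs of $(X,f)$ are dense, I can pick $(x,y)\in U\times V$ that is proximal for $(X,f)$; the issue is that I want a proximal pair for $(X,\mathbb{F})$, so the cleanest route is to invoke Proposition \ref{pr}, which already shows that any pair proximal for $(X,f)$ is automatically proximal for $(X,\mathbb{F})$. Hence the same pair $(x,y)\in U\times V$ witnesses density of proximal pairs for $(X,\mathbb{F})$, and this direction is immediate.

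The converse is the substantive part and will be the main obstacle. Assuming proximal pairs are dense for $(X,\mathbb{F})$, I would fix a box $U\times V$ and run a nested-neighborhood construction analogous to Proposition \ref{prox}. First choose $r_m\in\mathbb{N}$ with $\sum_{n=r_m}^{\infty} D(f_n,f)<\frac{1}{3\cdot 2^m}$. Starting from nested open sets with $\overline{U_1}\subset U$, $\overline{V_1}\subset V$, I pull these back by $\omega_{r_m}$ (using surjectivity of the family so the preimages are non-empty open sets) and apply density of $\mathbb{F}$-proximal pairs to the pulled-back box to extract points whose $\mathbb{F}$-orbits come within $\frac{1}{3\cdot2^m}$ at some time $s_m$. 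Then Corollary \ref{apcor} converts this into closeness of $f^{s_m}$-images of the corresponding points in $U_m\times V_m$, namely $d(f^{s_m}(x'),f^{s_m}(y'))<\frac{1}{2^m}$, and continuity lets me shrink to boxes $\overline{U_{m+1}}\times\overline{V_{m+1}}\subset U_m\times V_m$ on which this estimate is uniform. Iterating produces nested closed boxes in the compact space $X\times X$; any point $(u,v)$ in the intersection satisfies $d(f^{s_m}(u),f^{s_m}(v))<\frac{1}{2^m}$ for all $m$, forcing $\liminf d(f^n(u),f^n(v))=0$, so $(u,v)$ is an $f$-proximal pair lying in $U\times V$. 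The delicate point throughout is that both coordinates must be advanced by the \emph{same} time $s_m$, which is exactly why the single common tail bound $\sum_{n=r_m}^{\infty}D(f_n,f)$ (valid uniformly over all iterates via collective convergence) is essential; handling the two coordinates with a common $r_m$ and a common $s_m$ is where I would take the most care.
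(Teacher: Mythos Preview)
Your proposal is correct and follows essentially the same route as the paper: the forward direction is dispatched by Proposition~\ref{pr}, and the converse is handled by the same nested--box construction in $X\times X$, pulling back through $\omega_{r_m}$, extracting an $\mathbb{F}$-proximal pair, using Corollary~\ref{apcor} to transfer the proximity estimate to $f^{s_m}$, and intersecting the resulting nested closed boxes by compactness. The only point worth tightening in execution is to make explicit that the proximity time for the pulled-back pair can be chosen of the form $r_m+s_m$ (which is automatic since a proximal pair comes close along an infinite sequence of times), so that Corollary~\ref{apcor} applies to both coordinates with the common index $r_m$.
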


\begin{proof}
As every pair proximal for $(X,f)$ is proximal for $(X,\mathbb{F})$, if $(X,f)$ has dense set of proximal pairs then $(X,\mathbb{F})$ has dense set of proximal pairs.

Conversely, let $(X,\mathbb{F})$ exhibit dense set of proximal pairs and let $U_1\times U_2$ be any non-empty open set in $X\times X$. Let $U_i'$ be non-empty open such that $\overline{U_i'}\subset U_i$. As $\sum \limits_{n=1}^{\infty} D(f_n,f) < \infty$, for each $m\in \mathbb{Z}^+$ there exists $r_m\in\mathbb{N}$ such that $\sum \limits_{n=r_m}^{\infty} D(f_n,f) < \frac{1}{3.2^m}$.Fix $m=0$ and let $U_{1,m}=U_1'$ and $U_{2,m}=U_2'$.  For $i=1,2$, let $V_{i,m}=\omega_{r_m}^{-1}(U_{i,m})$. Then $V_{1,m}\times V_{2,m}$ is open in $X\times X$. Consequently there exists a $(x_{1,m},x_{2,m})\in V_{1,m}\times V_{2,m}$ such that $(x_{1,m},x_{2,m})$ is proximal for $(X,\mathbb{F})$ and hence there exists a sequence $(n_{k,m})$ in $\mathbb{N}$ such that $\lim\limits_{n_{k,m}\rightarrow\infty}d(\omega_{n_{k,m}+r_m}(x_{1,m}),\omega_{n_{k,m}+r_m}(x_{2,m}))=0$. Choose $s_m\in\mathbb{N}$ such that $d(\omega_{s_m+r_m}(x_{1,m}),\omega_{s_m+r_m}(x_{2,m}))<\frac{1}{3.2^m}$. Also, by corollary $2$, $d(\omega_{s_m+r_m}(x_{i,m}), f^{s_m}(\omega_{r_m}(x_{i,m})))< \sum \limits_{i=1}^{s_m} D(f_{r_m+i},f)<\frac{1}{3.2^m}$ and hence by triangle inequality, we have $d(f^{s_m}(\omega_{r_m}(x_{1,m})),f^{s_m}(\omega_{r_m}(x_{2,m})))< \frac{1}{2^m}$.

Note that $\omega_{r_m}(x_{i,m})\in U_{i,m}$ and assuming $u_{i,m}=\omega_{r_m}(x_{i,m})$ yields $d(f^{s_m}(u_{1,m}),f^{s_m}(u_{2,m}))< \frac{1}{2^m}$.  Thus, there exists neighborhoods $U_{i,m+1}$ of $u_{i,m}$ such that $\overline{U_{i,m+1}}\subset U_{i,m}$ and $d(f^{s_m}(\omega_{r_m}(x)),f^{s_m}(\omega_{r_m}(y)))< \frac{1}{2^m}$ for all $x\in U_{1,m+1}, y\in U_{2,m+1}$.

Repeating the process for each $m$, we obtain a nested sequence of open sets $U_{i,m+1}$, $\overline{U_{i,m+1}}\subset U_{i,m}\subset U_i$ such that $d(f^{s_m}(\omega_{r_m}(x)),f^{s_m}(\omega_{r_m}(y)))< \frac{1}{2^m}$ for all $x\in U_{1,m+1}, y\in U_{2,m+1}$.  As $\overline{U_{i,m+1}}$ is a nested decreasing sequence of closed sets in a compact metric space and hence $\bigcap \limits_{m=0}^{\infty} \overline{U_{i,m}}\neq\phi$. Then, for any $u_1\in \bigcap \limits_{m=0}^{\infty} \overline{U_{1,m}}\subset U_1$, $u_2\in \bigcap \limits_{m=0}^{\infty} \overline{U_{2,m}}\subset U_2$, as $u_i\in U_{i,m}$ for all $m$, we have $d(f^{s_m}(u_{1}),f^{s_m}(u_{2}))< \frac{1}{2^m}$ for all $m$ and hence $(u_1,u_2)$ is proximal for $(X,f)$. As the proof holds good for any pair of non-empty open sets $U_1,U_2$ in $X$, the set of pairs proximal for $(X,f)$ is dense in $X\times X$.
\end{proof}

\begin{Remark}\label{rem1}
The above proofs establish the equivalence of denseness of proximal cells (pairs) for the two systems $(X,\mathbb{F})$ and $(X,f)$. The proofs establish that under stated conditions, proximal pairs (each proximal cell) are dense in $(X,\mathbb{F})$ if and only if proximal pairs (each proximal cell) are dense in $(X,f)$. It may be noted that to establish equivalence of proximal cells (pairs) "commutative condition" is explicitly used and hence is needed for establishing the stated results. However, if the family $\mathbb{F}$ is feeble open, a similar argument establishes identical results under collective convergence of $\{(\omega^n_{n+k}):k\in\mathbb{N}\}$. Hence identical results can be obtained for a feeble open family $\mathbb{F}$ if collective convergence is guaranteed and the"commutative condition" is not needed to establish the stated results. The discussions also establish that if the family $\mathbb{F}$ is feeble open and each proximal cell is dense then Li-Yorke sensitivity is equivalent for the two systems. Consequently, the results established hold good when the family $\mathbb{F}$ is feeble open and the collection $\{(\omega^n_{n+k}):k\in\mathbb{N}\}$ converges collectively. Further, feeble openness is once again a necessary condition to establish equivalence of Li-Yorke sensitivity for the two systems and the result does not hold good if feeble openness of family $\mathbb{F}$ is dropped (as noted in Example \ref{sens}). The discussions lead to the following corollary.
\end{Remark}

\begin{Cor}
Let $X$ be compact and let $(X,\mathbb{F})$ be a non-autonomous system generated by a feeble open family $\mathbb{F}$. If $\{(\omega^n_{n+k}):k\in\mathbb{N}\}$ converges collectively then, each proximal cell (proximal pairs) is dense for $(X,f) \Leftrightarrow$ each proximal cell (proximal pairs) is dense for $(X,\mathbb{F})$.
\end{Cor}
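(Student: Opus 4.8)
The plan is to show that the two density equivalences of Propositions \ref{prox} and \ref{prp} survive when the hypotheses ``$\mathbb{F}$ commutes with $f$'' and ``$\sum_{n=1}^{\infty} D(f_n,f) < \infty$'' are traded for ``$\mathbb{F}$ feeble open'' and ``$\{(\omega^n_{n+k}):k\in\mathbb{N}\}$ converges collectively''. The governing observation is that the earlier proofs used those two original hypotheses only to produce, through Corollary \ref{apcor}, the uniform estimate
\[
d(\omega_{n+k}(w), f^k(\omega_n(w))) < \epsilon \quad \text{for all } k\in\mathbb{N},\ n\geq n_0,\ w\in X,
\]
and, as recorded in the Remark following Corollary \ref{apcor2}, this estimate is precisely equivalent to collective convergence. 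So wherever Corollary \ref{apcor} was quoted I would instead quote this estimate, which is now a standing assumption.

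\emph{Converse direction.} To show that denseness of proximal cells (pairs) for $(X,\mathbb{F})$ forces the same for $(X,f)$, I would reuse the converse arguments of Propositions \ref{prox} and \ref{prp} almost verbatim. Those arguments never invoked commutativity: they form the preimages $\omega_{r_m}^{-1}(U_m)$ (open by continuity of $\omega_{r_m}$), select a preimage of $x$ under the surjection $\omega_{r_m}$, use denseness of the $\mathbb{F}$-proximal cell there to obtain a near-coincidence along the $\mathbb{F}$-orbit, and transport it to the $f$-orbit via the uniform estimate, building a nested family of closed sets whose intersection is nonempty by compactness. Feeble openness is not needed in this direction.

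\emph{Forward direction.} This is the step that genuinely changes and is where I expect the main obstacle. In Proposition \ref{prox} the forward implication was immediate from Proposition \ref{pr}, that every $f$-proximal pair is $\mathbb{F}$-proximal; but the proof of Proposition \ref{pr} intertwines $\omega_r$ with $f^{n_k}$ using commutativity, which is unavailable here. Moreover, at a single fixed level $r$ the uniform estimate yields only $\liminf_k d(\omega_{r+k}(x),\omega_{r+k}(u)) \leq 2\epsilon$, not $0$, so exact $\mathbb{F}$-proximality cannot be read off pointwise. I would therefore manufacture the $\mathbb{F}$-proximal point by the same nested construction, now run forward. Given $x$ and nonempty open $U$, at stage $m$ I choose $r_m$ (increasing) so the uniform estimate holds with tolerance $\frac{1}{3\cdot 2^m}$; use feeble openness to guarantee $\mathrm{int}(\omega_{r_m}(U_m))\neq\emptyset$; use denseness of $Prox_f(\omega_{r_m}(x))$ to pick $p=\omega_{r_m}(u)\in\mathrm{int}(\omega_{r_m}(U_m))$ with $(\omega_{r_m}(x),p)$ proximal for $(X,f)$; choose $s_m$ with $d(f^{s_m}(\omega_{r_m}(x)),f^{s_m}(p))<\frac{1}{3\cdot 2^m}$; and combine with the uniform estimate to get $d(\omega_{r_m+s_m}(x),\omega_{r_m+s_m}(u))<\frac{1}{2^m}$. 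Continuity of $\omega_{r_m+s_m}$ then lets me shrink to an open $U_{m+1}\ni u$ with $\overline{U_{m+1}}\subset U_m$ on which this separation bound persists, and any $u^*\in\bigcap_m\overline{U_m}$ (nonempty by compactness) satisfies $d(\omega_{r_m+s_m}(x),\omega_{r_m+s_m}(u^*))\leq\frac{1}{2^m}$ with $r_m+s_m\to\infty$, hence $\liminf_n d(\omega_n(x),\omega_n(u^*))=0$ and $u^*\in U$. The proximal-pairs version is identical, run on a product box $U_1\times U_2$, with feeble openness supplying the interiors of both $\omega_{r_m}(U_{i,m})$.

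I would close by noting that feeble openness enters only in the forward direction, to keep the pushed-forward image $\omega_{r_m}(U_m)$ large enough for the dense set $Prox_f(\omega_{r_m}(x))$ to meet it, exactly as anticipated in Remark \ref{rem1}. The essential point, and the crux of the argument, is that the pointwise preservation of Proposition \ref{pr} must be replaced by this limiting construction, since collective convergence at any fixed stage controls the separation only up to a positive tolerance and the exact $\liminf=0$ must be produced by letting the tolerance tend to zero across a compactness-driven intersection.
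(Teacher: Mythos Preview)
Your proposal is correct and follows the approach the paper sketches in Remark \ref{rem1}: the converse halves of Propositions \ref{prox} and \ref{prp} carry over verbatim under collective convergence, while the forward halves (which in the original relied on Proposition \ref{pr} and hence on commutativity) are rebuilt via the same nested-intersection scheme, with feeble openness supplying nonempty $\mathrm{int}(\omega_{r_m}(U_m))$ so that the dense $f$-proximal cell of $\omega_{r_m}(x)$ can be hit there. The paper gives no explicit proof beyond ``a similar argument establishes identical results'', so you have in fact written out more than the paper does; your identification of exactly where commutativity was used and why feeble openness substitutes for it in the forward direction is precisely the content of the corollary.
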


\begin{Remark}
The above results establish that the dynamical behavior of $(X,\mathbb{F})$ and $(X,f)$ are closely related when the family of sequences $\{(\omega^n_{n+k}):k\in\mathbb{N}\}$ is collectively convergent. Such a condition is guaranteed when $f_n$ converges at a "sufficiently fast rate" and the family $\mathbb{F}$ commutes with the limit map $f$ and hence the results derived hold good when these two conditions are satisfied. As observed, such a condition is also guaranteed (under fast convergence) when the limit map $f$ is an isometry. However, "sufficiently fast rate of convergence" is not necessarily needed to establish collective convergence (Example \ref{eqex}) and collective convergence is guaranteed if the cumulative error can be made arbitrarily small. It may be noted that the conditions implying collective convergence are natural and can arise naturally in many applications. For example, if the non-autonomous system is generated by a commutative family $\mathbb{F}$, commutativity with the limit function is guaranteed. Further, if $f_n$ converges uniformly to $f$ and $f^n=I$ (the identity function) for some $n\in\mathbb{N}$, the validity of the derived results is guaranteed (under fast convergence). More generally, if $f_n$ converge uniformly to an isometry $f$ then, convergence at a sufficiently fast rate guarantees identical dynamical behavior of the two systems under consideration. The results can be applied to theory of group actions when the limit function is in the center of the acting group and interesting dynamics can be observed in this case. More generally, the results are true for a more generalized set of assumptions which are in general difficult to verify. In any case, the results derived can be visualized in many interesting situations and can be applied to many natural phenomena.
\end{Remark}

\section{Conclusion}
In this paper, dynamics of a non-autonomous system generated by a uniformly convergent sequence is investigated. In the process, we relate the dynamics of the non-autonomous system $(X,\mathbb{F})$ with the dynamics of the limiting system $(X,f)$. It is observed that if the family $\mathbb{F}$ commutes with $f$ and $f_n$ converges to $f$ at a "sufficiently fast rate" then many of the dynamical properties coincide for the two systems $(X,\mathbb{F})$ and $(X,f)$. In the process, equivalence of properties like equicontinuity, minimality, transitivity, weakly mixing, various notions of sensitivities, denseness of proximal cells and denseness of proximal pairs is established. While we establish that topological mixing for the two systems is equivalent for feeble open $\mathbb{F}$, we prove that if $(X,\mathbb{F})$ has a dense set of periodic points then $(X,f)$ has a dense set of periodic points. We give an example to show that the result does not hold in the opposite direction. We prove that if the limit map $f$ is an isometry (shrinking), the established results hold good without the "commutative condition". More generally, we prove that the equivalence of the dynamics of $(X,f)$ and $(X,\mathbb{F})$ can be established under the weaker assumption of collective convergence. We establish that denseness of proximal cell is equivalent to denseness of Li-Yorke cells for sensitive systems. In the process, we generalize the result obtained in \cite{kol} to the non-autonomous case. We prove that denseness of proximal cells (proximal pairs) is equivalent for the two systems under collective convergence. We also give examples to investigate the necessity of the conditions imposed.

\section*{Acknowledgement}
The authors thank the referee for his/her useful suggestions and remarks. The first author thanks National Board for Higher Mathematics (NBHM) for financial support.

\bibliography{xbib}

\end{document}